\documentclass[12pt]{article}

\usepackage[dvipsnames]{xcolor}
\usepackage{notes}

\def\O{\mathcal O}

\makeop{depth}
\makeop{tr}
\makeop{frk}

\let\cref\Cref
\let\mult e

\def\B{\mathcal B}

\let\obigwedge\bigwedge
\def\bigwedge{\mathop{\obigwedge}\nolimits}

\title{Antiampleness and ampleness of the Frobenius cokernel}
\author{Devlin Mallory}

\begin{document}
\maketitle
\abstract{We show that if $X$ is a smooth Fano variety containing a line or a conic with respect to $-K_X$, then the Frobenius cokernel $\B_X:=\coker(\O_X\to F_* \O_X)$ is not antiample; using this criteria, we show that the only smooth Fano threefolds with antiample Frobenius cokernel are $\P^3$ and the quadric threefold (in characteristic $p\neq 2$), thus answering a question raised in \cite{CRP}. 
We also show that for any smooth complete intersection $X\subset \P^n$ of degree $d_1,\dots,d_c$ such that $\sum d_i = n$ or $n-1$, the Frobenius cokernel is not antiample. 
We also study the kernels of the higher Cartier operators, and show that for $\P^n$ and quadric hypersurfaces, all the kernels of the higher Cartier operators are antiample, and thus that the full set of kernels of the Cartier operators cannot characterize projective space.
 Finally, we show that the Frobenius cokernel is ample if and only if the cotangent bundle is ample.}

\section{Introduction}

Let $X$ be a smooth projective variety over an algebraically closed field of characteristic~$p$, and write $F:X\to X$ for the (absolute) Frobenius.
A recurring question in positive-characteristic geometry is how the $\O_X$-module $F_* \O_X$ reflects the geometry of $X$; for example, if $X$ is a toric variety then $F_*\O_X$ is the direct sum of line bundles, and if $F_* L$ is a direct sum of line bundles for every line bundle $L$, then $X$ is a toric variety \cite{Achinger}.

In this tradition, \cite{CRP} examined the positivity properties of the dual of the Frobenius cokernel $\B_X := \coker(\O_X \to F_* \O_X)$, and raised the question of which varieties have ample $\B_X^\vee$.
They note that this cannot characterize projective space, since quadrics of dimension $\geq 3$ in odd characteristic also have ample $\B_X^\vee$, but ask (for example) which Fano threefolds, or which hypersurfaces, have ample $\B_X^\vee$. They also raise the question of whether the ampleness of the kernels of the higher Cartier operators (of which $\B_X^\vee$ is the first nontrivial kernel) can characterize projective space.
More recent work of \cite{CROZ} has focused on the case where $X$ is a toric variety, and shown that the only smooth toric varieties with ample (or even big) $\B_X^\vee$ are projective space, as well as defining and calculating some numerical measures of the positivity of~$\B_X^\vee$.

In this note, we find a geometric obstruction to the ampleness of $\B_X^\vee$:

\begin{thm}[\Cref{main}]
Let $X$ be a smooth projective variety over a field of characteristic $p>0$. If $X$ has a smooth positive-dimensional subvariety $Z$ with normal bundle $N_{Z/X}$ such that $(\det N_{Z/X})^{1-p}$ is effective, then $\B_X^\vee$ is not ample.
\end{thm}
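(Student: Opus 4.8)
The plan is to produce, on the subvariety $Z$, a rank-one sub-bundle of $\B_X|_Z$ isomorphic to $(\det N_{Z/X})^{1-p}$ and then to contradict ampleness. Recall that ampleness of a vector bundle is inherited under restriction to a closed subvariety and under passage to a quotient bundle, and that an ample line bundle $A$ on a positive-dimensional projective variety has $H^0(A^{-1})=0$. Granting a sub-bundle $S:=(\det N_{Z/X})^{1-p}\inc \B_X|_Z$, I would dualize the inclusion to obtain a surjection of bundles $\B_X^\vee|_Z\to S^\vee=(\det N_{Z/X})^{p-1}$. If $\B_X^\vee$ were ample, then so would be $\B_X^\vee|_Z$ and hence this quotient; but the hypothesis that $(\det N_{Z/X})^{1-p}$ is effective says exactly that $H^0\!\big((\det N_{Z/X})^{1-p}\big)\neq 0$, i.e. that the inverse of the ample line bundle $(\det N_{Z/X})^{p-1}$ has a nonzero section, a contradiction. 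The borderline case where $\det N_{Z/X}$ is numerically trivial, such as a conic, is covered as well, since $\O_Z$ is not ample on a positive-dimensional $Z$. I note that one cannot instead argue with $\det(\B_X^\vee|_Z)$, since the determinant of $\B_X^\vee$ is a power of $\omega_X^{-1}$ and so only sees $-K_X|_Z$, remaining insensitive to $N_{Z/X}$; detecting the normal bundle is precisely the role of the rank-one piece below. Thus everything reduces to constructing $S$.

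To do so I would work in local coordinates $x_1,\dots,x_n$ adapted to $Z=\{x_{r+1}=\dots=x_n=0\}$, where $r=\dim Z$ and $c=n-r$, so that $x_{r+1},\dots,x_n$ generate the conormal bundle $N_{Z/X}^\vee=\mathcal I_Z/\mathcal I_Z^2$ and $F_*\O_X$ is free over $\O_X$ on the monomials $x^I$ with each exponent between $0$ and $p-1$. The candidate generator of $S$ is the image in $\B_X=F_*\O_X/\O_X$ of the top normal monomial $\prod_{j>r}x_j^{p-1}$, restricted to $Z$. This is a basis element other than $1$, so its class is a nonvanishing local section which is locally a direct summand of $\B_X|_Z$; the work is to show that these local sections glue to a sub-line-bundle and to identify its isomorphism class.

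The main step, which I expect to be the principal obstacle, is the transformation law. Under a change of normal coordinates sending $x_j$ to $\sum_k A_{jk}x_k$, where $A$ represents the transition cocycle of $N_{Z/X}^\vee$, I would expand $\prod_{j>r}\big(\sum_k A_{jk}x_k\big)^{p-1}$. This is homogeneous of degree $c(p-1)$ in the normal coordinates, and the unique monomial of that degree with all exponents $\le p-1$ is $\prod_{j>r}x_j^{p-1}$; every other monomial has some exponent $\ge p$ and hence, rewritten in the Frobenius basis, carries a coefficient divisible by some $x_k^p$, which vanishes on $Z$. Therefore on $Z$ the generator transforms exactly by the coefficient of $\prod_{j>r}x_j^{p-1}$, with no correction terms, and it remains to show that this coefficient equals $(\det A)^{p-1}$. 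I would establish this by passing to the Artinian Gorenstein algebra $k[x_{r+1},\dots,x_n]/(x_{r+1}^p,\dots,x_n^p)$, whose one-dimensional socle is spanned by $\prod_{j>r}x_j^{p-1}$ and on which $\mathrm{GL}_c$ acts through the character $(\det)^{p-1}$: the change of coordinates multiplies the socle generator precisely by $(\det A)^{p-1}$. Matching this scalar against the cocycle of $\det N_{Z/X}^\vee$ identifies $S$ with $(\det N_{Z/X}^\vee)^{p-1}=(\det N_{Z/X})^{1-p}$, completing the construction and hence the proof.
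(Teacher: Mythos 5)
Your reduction of the theorem to constructing a rank-one subbundle $S=(\det N_{Z/X})^{1-p}\subset \B_X|_Z$ is fine, and your socle computation (that the top normal monomial transforms by $(\det A)^{p-1}$, which is the paper's \cref{vector}) is correct \emph{as a statement about multiplication in the ring $\O_X$}. The gap is in the gluing step, exactly where you flagged the principal obstacle: the $\O_Z$-module structure on $\B_X|_Z$ is Frobenius-twisted, i.e.\ a function $g$ acts on sections of $F_*\O_X$ (hence of $\B_X|_Z$) through its $p$-th power $g^p$. Knowing that the socle section picks up the ring-theoretic factor $(\det A)^{p-1}$ therefore does \emph{not} produce an $\O_Z$-linear transition function for a line bundle. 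To convert ring-multiplication by $(\det A)^{p-1}$ into the module structure, you must expand $(\det A)^{p-1}=\sum_J g_J^{\,p}\,x^J$ in the Frobenius basis (with $x^J$ ranging over monomials with exponents $<p$), and since $(\det A)^{p-1}$ is in general not a $p$-th power modulo $I$ (already for $Z$ a curve and $A$ a single transition function $t$, the function $t^{p-1}$ is not a $p$-th power), there are nonzero $g_J$ with $J\neq 0$. Hence after a coordinate change the new socle section equals $\sum_J \overline{g_J}\cdot\bigl(x^J\prod_{j>r}x_j^{p-1}\bigr)$ in $\B_X|_Z$: it is a genuine combination of many Frobenius basis vectors, not an $\O_Z$-multiple of the old socle section. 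The local rank-one spans do not glue, and the subbundle $S$ you need does not exist.

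What does glue is the $\O_Z$-span of \emph{all} the sections $x^J\prod_{j>r}x_j^{p-1}$ with $J$ tangential, a subbundle of rank $p^{\dim Z}$, namely $F_*\bigl((\det N_{Z/X})^{1-p}\bigr)$, the Frobenius pushforward on $Z$ of your intended line bundle; this is precisely the object the paper extracts as the bottom piece of the $I$-adic filtration of $F_*(I/I^{[p]})\subset \B_X|_Z$ (\cref{filt}, \cref{surj}, together with \cref{smooth}). This changes the endgame as well: a surjection $\B_X^\vee|_Z \twoheadrightarrow F_*\bigl((\det N_{Z/X})^{1-p}\bigr)^\vee$ does not immediately contradict effectiveness, because the target is no longer the inverse of an ample line bundle. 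The paper closes this with its duality lemmas: Grothendieck duality for Frobenius gives $F_*(L)^\vee\cong F_*(L^\vee\otimes\omega_Z^{1-p})$ (\cref{dual}), and combined with the vanishing of maps from ample bundles to $\O_Z$ (\cref{dualample}, \cref{pushample}) one concludes that $F_*(L)^\vee$ ample forces $L$ not effective (\cref{amplenoteff}). So your outline runs structurally parallel to the paper's proof, but the missing Frobenius twist is exactly what forces the pushforward $F_*$ and this extra duality input; as written, the construction of $S$ fails and the argument has a genuine gap.
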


This applies, in particular, whenever $X$ contains a smooth rational curve $C$ with $-K_X.C \leq 2$ (see \Cref{curveobstruction}).
This allows us settle the question of which Fano threefolds have ample $\B_X^\vee$, and likewise give an answer for Fano complete intersections of small index: 

\begin{thm}[\Cref{Fano3} and \Cref{CIs}]
Let $X$ be a smooth projective variety over an algebraically closed field of characteristic $p>0$.
If $X$ is
\begin{enumerate}
\item a Fano threefold that is not $\P^3$ or a quadric threefold, or
\item a complete intersection in $\P^n$ of dimension $\geq 2$ and multidegree $d_1,\dots,d_c\geq 2$, and $\sum d_i = n$ or $n-1$,
\end{enumerate}
then $\B_X^\vee$ is not ample.
\end{thm}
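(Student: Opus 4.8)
The plan is to derive both statements from \Cref{curveobstruction}, which guarantees that $\B_X^\vee$ is not ample as soon as $X$ contains a smooth rational curve $C\cong\P^1$ with $-K_X\cdot C\le 2$. Recall this is exactly the numerical range in which $(\det N_{C/X})^{1-p}$ is effective: adjunction applied to $0\to T_C\to T_X|_C\to N_{C/X}\to 0$ gives $\det N_{C/X}=(-K_X)|_C\otimes K_C$, hence $\deg\det N_{C/X}=(-K_X\cdot C)-2$, so that $(1-p)\bigl((-K_X\cdot C)-2\bigr)\ge 0$ (and thus the line bundle is effective on $\P^1$) precisely when $-K_X\cdot C\le 2$. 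The whole problem therefore reduces to producing a line or a conic with respect to $-K_X$ on each variety in the list.

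For the complete intersections this is a direct computation together with a classical nonemptiness statement. Adjunction gives $-K_X=(n+1-\sum_i d_i)\,H|_X$, so the hypothesis $\sum d_i=n$ (resp.\ $n-1$) says precisely that $X$ has Fano index $1$ (resp.\ $2$); in either case any line $\ell\subset X\subset\P^n$ satisfies $-K_X\cdot\ell\le 2$ and is automatically a smooth rational curve, so by \Cref{curveobstruction} it suffices to produce a line on $X$. Writing $S$ for the tautological rank-two subbundle on $G(1,n)$, the Fano scheme of lines $F(X)$ is the zero locus of a section of $\bigoplus_i\mathrm{Sym}^{d_i}S^\vee$, of expected dimension $2(n-1)-\sum_i(d_i+1)$, which equals $\dim X-2\ge 0$ when $\sum d_i=n$ and $\dim X-1\ge 0$ when $\sum d_i=n-1$. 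Since this expected dimension is nonnegative, $F(X)$ is nonempty for every smooth complete intersection of the stated type (Debarre--Manivel), so $X$ contains a line and \Cref{curveobstruction} applies.

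For the Fano threefolds I would argue by the classification, organized through the pseudoindex $\iota_X=\min\{-K_X\cdot C: C\text{ rational}\}$. The generalized Mukai inequality $\rho_X(\iota_X-1)\le 3$, known in dimension three, forces $\rho_X=1$ as soon as $\iota_X\ge 3$. For Picard rank one the Kobayashi--Ochiai bound identifies the threefolds of index $\ge 3$ as exactly $\P^3$ (index $4$) and the quadric $Q^3$ (index $3$); every remaining Picard-rank-one Fano threefold has index $1$ or $2$ and contains a line (classical, after Shokurov and Iskovskikh), whence $\iota_X\le 2$. For Picard rank $\ge 2$ the inequality already gives $\iota_X\le 2$ directly. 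Combining, every Fano threefold other than $\P^3$ and $Q^3$ carries a rational curve of anticanonical degree $\le 2$.

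The remaining, and main, difficulty is to arrange that such a curve is \emph{smooth}, since the pseudoindex a priori only records possibly singular rational curves. Here I would invoke the finer structure of the Iskovskikh--Mori--Mukai classification: in the Picard-rank-one cases the lines produced by Shokurov and Iskovskikh are embedded $\P^1$'s, while for Picard rank $\ge 2$ one takes a general fiber of the Mori contraction of an extremal ray of length $\le 2$ --- a fiber of a conic bundle, a ruling line of the exceptional divisor over a blown-up point, or a fiber of the blow-down of a smooth curve --- each of which is a smooth rational curve with $-K_X\cdot C\le 2$. Checking across all contraction types that such a smooth curve always exists is the technical heart of the argument; once it is in hand, \Cref{curveobstruction} again yields that $\B_X^\vee$ is not ample.
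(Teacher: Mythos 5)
Your complete-intersection argument is correct and is essentially the paper's own proof: adjunction gives $-K_X\cdot\ell = n+1-\sum d_i\le 2$ for any line $\ell\subset X$, and nonemptiness of the Fano scheme of lines follows from the same dimension count (expected dimension $=2n-2-\sum d_i-c\ge 0$ exactly when $\dim X\ge 2$, given $\sum d_i\le n$); lines are automatically smooth rational curves, so \Cref{curveobstruction} applies.

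The Fano threefold part, however, has a genuine gap, and it is precisely the issue the paper's proof is organized around: you are importing characteristic-zero classification results into characteristic $p$ without justification. The existence of lines on Picard-rank-one Fano threefolds of index $1$ or $2$ is \emph{not} classical in characteristic $p$ --- Shokurov's and Iskovskikh's proofs use characteristic-zero methods --- and the same goes for the generalized Mukai inequality and for the structure theory of extremal contractions you invoke for $\rho\ge 2$. The paper's proof of \Cref{Fano3} deals with this head-on: it first reduces to $\rho(X)=1$ by citing \cite[Theorem~5.19]{CRP} (so no contraction analysis or Mukai-type inequality is ever needed), then splits according to whether $-K_X$ is very ample. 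When it is, existence of a line is a characteristic-$p$ theorem \cite[Theorem~6.6]{liftvanishI}; when it is not, one lifts $X$ to $W(k)$ using \cite[Theorem~7.3]{liftvanishII} or \cite[Theorem~7.3]{FanoII}, identifies the lift (sextic double solid, double cover of a quadric, or a degree-one del Pezzo threefold), and only then applies the characteristic-zero result \cite[Corollary~1.5]{Shokurovline} or a $(-1)$-curve argument on a hyperplane section, transporting the line back to $X$. Your proposal never engages with this lifting step, which is the actual content of the threefold case.

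Beyond the characteristic issue, your argument is also incomplete on its own terms: you explicitly defer the ``technical heart'' --- checking across all contraction types that a \emph{smooth} rational curve of anticanonical degree $\le 2$ exists --- so the rank-$\ge 2$ case is asserted rather than proved. And at least one of the contraction types you list genuinely fails in small characteristic: in characteristic $2$ there exist wild conic bundles whose every fiber is a double line, so ``take a general fiber of the conic bundle'' does not produce a smooth rational curve without further argument. These are not cosmetic omissions; avoiding exactly these case-checks is why the paper routes the reduction to $\rho(X)=1$ through \cite[Theorem~5.19]{CRP} rather than through the cone theorem.
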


We also show that the ampleness of the kernels of the higher Cartier operators cannot characterize projective space, by showing that all the kernels of the higher Cartier operators are ample for quadric hypersurfaces:

\begin{thm}[\Cref{wedgeTX} and \Cref{TXhyper}]
Let $X$ be a smooth projective variety over an algebraically closed field of characteristic $p>0$. 
\begin{enumerate}
\item 
If $\bigwedge^i T_X$ is ample for some $i$, then $(\B^i_X)^\vee$ is ample.
\item
If $X$ is a smooth degree-$d$ hypersurface in $\P^n$, then $\bigwedge^i T_X$ is ample for $d\leq i \leq n-1$.
In particular, if $Q$ is a smooth quadric hypersurface in $\P^n$ then $\bigwedge^i T_Q$ is ample for $2\leq i \leq n-1$,
and if $p>2$ then this holds for all $i\geq 1$
\end{enumerate}
\end{thm}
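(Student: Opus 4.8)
My plan is to prove the two parts independently; part (2) manufactures examples satisfying the hypothesis of part (1). For part (1), I would first identify $\B^i_X$ with the $i$-th coboundary sheaf $B^i$, the image of the de Rham differential $d\colon F_*\Omega^{i-1}_X\to F_*\Omega^i_X$, which is the kernel of the $i$-th Cartier operator. The Cartier isomorphism then supplies the two short exact sequences $0\to B^i\to Z^i\to\Omega^i_X\to 0$ and $0\to Z^i\to F_*\Omega^i_X\to B^{i+1}\to 0$, where $Z^i=\ker d$; a descending induction (starting from $B^{\dim X+1}=0$) shows every sheaf in sight is locally free on the smooth $X$. Dualizing both sequences realizes $(\B^i_X)^\vee$ as a quotient of $(Z^i)^\vee$ and $(Z^i)^\vee$ as a quotient of $(F_*\Omega^i_X)^\vee$; composing, $(\B^i_X)^\vee$ is a quotient of $(F_*\Omega^i_X)^\vee$. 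Since quotients of ample bundles are ample, it suffices to prove that $(F_*\Omega^i_X)^\vee$ is ample.

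Here I would invoke Grothendieck duality for the finite flat Frobenius $F$, whose relative dualizing sheaf is $\omega_F=\omega_X\otimes F^*\omega_X^{-1}=\omega_X^{1-p}$, to rewrite $(F_*\Omega^i_X)^\vee\cong F_*\bigl(\bigwedge^i T_X\otimes\omega_X^{1-p}\bigr)$. The hypothesis makes the bundle inside the pushforward ample: if $\bigwedge^i T_X$ is ample then so is its determinant $(\omega_X^{-1})^{\binom{\dim X-1}{i-1}}$, so $\omega_X^{-1}$ is ample and $X$ is Fano; hence $\omega_X^{1-p}$, and therefore $\bigwedge^i T_X\otimes\omega_X^{1-p}$, is ample. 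The remaining — and, I expect, hardest — point is that Frobenius pushforward does not in general preserve ampleness (for instance $F_*\O_{\P^1}(1)\cong\O\oplus\O$ is not ample), so the argument must genuinely use the twist by $\omega_X^{1-p}$, whose positivity grows linearly in $p$. I would prove the ampleness of $F_*\bigl(\bigwedge^i T_X\otimes\omega_X^{1-p}\bigr)$ by testing ampleness after the finite surjective $F$ and exploiting this relative-dualizing-sheaf twist; this Frobenius-positivity statement is the technical core of part (1).

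Part (2) I would handle by elementary positivity manipulations that are characteristic-free in the stated range. From the Euler sequence $\O(1)^{\oplus(n+1)}\twoheadrightarrow T_{\P^n}$, taking $\bigwedge^k$ gives a surjection $\O(k)^{\oplus\binom{n+1}{k}}\twoheadrightarrow\bigwedge^k T_{\P^n}$, and twisting by $\O(-d)$ a surjection $\O(k-d)^{\oplus\binom{n+1}{k}}\twoheadrightarrow\bigwedge^k T_{\P^n}(-d)$. When $k\ge d+1$ the source is ample, so $\bigwedge^k T_{\P^n}(-d)$ is ample, and it stays ample after restriction to $X$. On the other hand, applying $\bigwedge^k$ to the normal bundle sequence $0\to T_X\to T_{\P^n}|_X\to\O_X(d)\to 0$ (whose top filtered quotient is $\bigwedge^{k-1}T_X\otimes\O_X(d)$) and twisting by $\O_X(-d)$ produces a surjection $\bigwedge^k T_{\P^n}(-d)|_X\twoheadrightarrow\bigwedge^{k-1}T_X$. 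For $d+1\le k\le n$ the source is ample, so its quotient $\bigwedge^{k-1}T_X$ is ample; writing $i=k-1$ this is exactly the assertion that $\bigwedge^i T_X$ is ample for $d\le i\le n-1$, and $d=2$ gives the quadric range $2\le i\le n-1$.

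Finally, the claimed extension for quadrics when $p>2$ lives precisely at the boundary $k=d$, where the twist $\O(k-d)=\O$ only makes $\bigwedge^k T_{\P^n}(-d)$ nef, so the quotient argument yields nefness rather than ampleness of $\bigwedge^{d-1}T_X$; extracting genuine ampleness there requires an honest positive-characteristic input, and one must keep the range in check, since Mori's theorem prevents $T_Q$ from being ample once $\dim Q\ge 2$. I therefore expect the two real obstacles to be this boundary upgrade and, more seriously, the Frobenius-pushforward positivity underlying part (1); the rest is formal, resting only on the stability of ampleness under quotients, tensor products, restriction, and pullback by the finite surjective Frobenius.
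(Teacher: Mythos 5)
The genuine gap is in part (1), and it sits exactly where you flagged it. Your reduction---dualizing the two short exact sequences to obtain a surjection $(F_*\Omega_X^i)^\vee \twoheadrightarrow (\B^i_X)^\vee$, so that it suffices to prove $(F_*\Omega_X^i)^\vee$ is ample---is precisely the paper's, and your duality rewriting $(F_*\Omega_X^i)^\vee \cong F_*\bigl(\bigwedge^i T_X \otimes \omega_X^{1-p}\bigr)$ is correct. But the ampleness of this pushforward is never established in your proposal, and your suggested strategy (``testing ampleness after the finite surjective $F$, exploiting the $\omega_X^{1-p}$ twist'') does not go through as stated: the natural tool after pulling back by $F$ is the canonical filtration of $F^*F_*E$ with graded pieces $E \otimes T^j\Omega_X^1$ \cite{Katz,Sun}, which dualizes to a filtration of $F^*\bigl((F_*\Omega_X^i)^\vee\bigr)$ with graded pieces $\bigwedge^i T_X \otimes (T^j\Omega_X^1)^\vee$; already for $j=1$ this piece is $\bigwedge^i T_X \otimes T_X$, which is not ample in the cases of interest (by \cite{Mori}, $T_X$ is ample only for $\P^n$), so the only formal criterion available (an extension of ample bundles is ample) does not apply, and the filtration yields no conclusion in either direction. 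The paper closes this step by quoting a nontrivial external theorem, \cite[Theorem~1.1]{antiample}: if $E^\vee$ is ample and $f$ is a finite surjection, then $(f_*E)^\vee$ is ample (proved there in characteristic $0$, but the proof carries over verbatim in characteristic $p$). Applied with $E=\Omega_X^i$ and $f=F$, this is exactly your missing step; without citing or reproving that theorem, your part (1) is a correct reduction to a true but unproven statement.

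By contrast, your part (2) is correct and is a genuinely different argument from the paper's. The paper proves a more general proposition---for any smooth $X \subset \P^n$ with $\omega_X=\O_X(-a)$, $a>0$, one has $\bigwedge^i T_X$ ample for $\dim X-a+2 \le i \le \dim X$---by using the pairing $\bigwedge^i\Omega_X \otimes \bigwedge^{\dim X-i}\Omega_X \to \omega_X$ to write $\bigwedge^i T_X \cong \bigl(\bigwedge^{\dim X-i}\Omega_X\bigr)(a)$, and then establishing global generation of $\bigwedge^k\Omega_{\P^n}(k+1)$ via Castelnuovo--Mumford regularity and the Bott formula. Your route---the Euler-sequence surjection $\O(k-d)^{\oplus\binom{n+1}{k}} \twoheadrightarrow \bigwedge^k T_{\P^n}(-d)$ together with the surjection $\bigwedge^k T_{\P^n}(-d)\res X \twoheadrightarrow \bigwedge^{k-1}T_X$ extracted from the normal bundle sequence---avoids cohomology entirely and gives the stated hypersurface range $d \le i \le n-1$; what it loses is only the paper's extra generality for subvarieties of higher codimension, where the normal bundle is no longer a line bundle. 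Your reading of the final clause is also the right one: ampleness of $T_Q$ itself is impossible for $\dim Q \ge 2$, and the paper gets the $p>2$, $i=1$ case not from wedge powers of $T_Q$ but from the separate positive-characteristic fact that $(\B^1_Q)^\vee$ is ample for smooth quadrics of dimension $\ge 3$ in characteristic $\ne 2$, by \cite[Corollary~4.8]{CRP}; that citation (or an equivalent argument) is needed to complete that clause.
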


Finally, we consider the opposite extreme, and characterize the \emph{ampleness} of $\B_X$:

\begin{thm}[\Cref{ample}]
Let $X$ be a smooth projective variety over an algebraically closed field of characteristic $p>0$. Then $\B_X$ is ample if and only if $\Omega_X$ is ample.
\end{thm}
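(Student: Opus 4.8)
The plan is to compare $\B_X$ and $\Omega_X$ through the Cartier operator together with the canonical order filtration on $F_*\O_X$. First I would recall that the de Rham differential $d\colon F_*\O_X\to F_*\Omega_X$ is $\O_X$-linear with kernel exactly $\O_X$, so that $d$ identifies $\B_X=\coker(\O_X\to F_*\O_X)$ with the sheaf $B^1$ of locally exact $1$-forms inside $F_*\Omega_X$. Writing $Z^1\subset F_*\Omega_X$ for the closed forms, the inverse Cartier isomorphism $C^{-1}\colon\Omega_X\xrightarrow{\sim}Z^1/B^1$ then yields the exact sequence
$$0\to\B_X\to Z^1\to\Omega_X\to 0.$$
Alongside this I would set up the canonical order (symbol) filtration $\O_X=\F_0\subset\F_1\subset\dots\subset\F_{n(p-1)}=F_*\O_X$, where $\F_j$ is the $\O_X$-submodule of functions of order $\le j$. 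Its graded pieces are canonically surjected onto by $F^*(\Omega_X^{\otimes j})$, via the map sending $F^*(df_1\otimes\cdots\otimes df_j)$ to the symbol of $f_1\cdots f_j$; this is well defined because altering an $f_i$ by a $p$-th power lowers the order. Thus $\mathrm{gr}^j F_*\O_X$ is a quotient of $F^*(\Omega_X^{\otimes j})$ for $j\ge 1$, and $\mathrm{gr}^1 F_*\O_X\cong F^*\Omega_X$.

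For the implication $\Omega_X$ ample $\Rightarrow\B_X$ ample I would argue purely with this filtration. Since $F$ is finite and surjective, $F^*$ preserves ampleness, and tensor powers and quotients of ample bundles are ample; hence each $\mathrm{gr}^j F_*\O_X$ with $j\ge 1$, being a quotient of the ample bundle $F^*(\Omega_X^{\otimes j})$, is ample. The induced filtration on $\B_X=F_*\O_X/\O_X$ has associated graded $\bigoplus_{j\ge 1}\mathrm{gr}^j F_*\O_X$, so $\B_X$ is an iterated extension of ample bundles and therefore ample. This is the exact analogue, on the cotangent side, of the argument behind \Cref{wedgeTX}.

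The reverse implication, $\B_X$ ample $\Rightarrow\Omega_X$ ample, is where I expect the real difficulty. By the Cartier sequence it would suffice to prove that the middle term $Z^1$ is ample, since $\Omega_X=Z^1/\B_X$ is then ample as a quotient; equivalently, as $F$ reflects ampleness, it suffices to show $F^*\Omega_X\cong\mathrm{gr}^1 F_*\O_X$ is ample. The obstacle is that $F^*\Omega_X$ occurs as the \emph{bottom} step $\F_1/\O_X$ of the filtration, i.e.\ as a \emph{subsheaf} of $\B_X$, and subsheaves of ample bundles need not be ample. Dually, the only pieces of $\Omega_X$ that $\B_X$-ampleness hands us for free come from the top of the filtration, and these produce merely the ample line bundle $F^*\omega_X^{\,p-1}$ and twists such as $F^*(T_X\otimes\omega_X^{\,p-1})$, none of which recovers ampleness of $\Omega_X$ itself.

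To overcome this I would not try to extract $\Omega_X$ as a subobject, but instead aim directly at the ampleness of $Z^1$, exploiting the additional structure carried by $F_*\O_X$: the $\O_X$-linear field $\theta\colon F_*\O_X\to F_*\O_X\otimes\Omega_X$ induced by differentiation (well defined since $\partial_i$ kills $p$-th powers), whose symbol lowers the order filtration and interpolates between the graded pieces. The strategy is to use $\theta$, together with the chain of Cartier sequences $0\to Z^i\to F_*\Omega^i\to B^{i+1}\to 0$ and $0\to B^i\to Z^i\to\Omega^i\to 0$ up the de Rham complex, to realize $Z^1$ (or a suitable twist) as a quotient of a bundle assembled from $\B_X$, so that ampleness of $\B_X$ propagates to ampleness of the closed-form sheaf $Z^1$ without circularly assuming $\Omega_X$ ample. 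I expect this propagation—controlling the extensions relating $Z^1$, the $F_*\Omega^i$, and the higher exact-form sheaves tightly enough to conclude ampleness of $Z^1$ from ampleness of $\B_X$ alone—to be the genuinely delicate step, and the crux of the entire argument.
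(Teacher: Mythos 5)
There are genuine gaps in both directions, and they share a common cause.

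The direction you write out in full ($\Omega_X$ ample $\Rightarrow \B_X$ ample) rests on a filtration that does not exist: the ``order filtration'' $\O_X=\F_0\subset\F_1\subset\dots\subset F_*\O_X$ is not coordinate-independent, so the $\F_j$ are not globally defined $\O_X$-submodules of $F_*\O_X$. Already for $\dim X=1$ and $p\geq 3$: in a local coordinate $x$ one has $\F_1=\{f^p+g^p x\}$, and the function $y=x+x^2$ (a perfectly good coordinate change) does not lie in this set, since every monomial of $f^p$ has degree $\equiv 0 \bmod p$ and every monomial of $g^px$ has degree $\equiv 1\bmod p$, while $x^2$ has degree $\not\equiv 0,1 \bmod p$; yet $y\in\F_1$ computed in the $y$-coordinate. (Relatedly, your symbol map is not well defined: $1\otimes d(x^2)\mapsto x^2$ by your rule, but $1\otimes d(x^2)=2x^p\otimes dx\mapsto 2x^{p+1}$ by linearity, and these do not agree.) What is canonical is not an increasing ``order $\leq j$'' filtration on $F_*\O_X$ but a \emph{decreasing} filtration on the Frobenius pullback $F^*F_*\O_X$, coming from the Cartier connection (Katz, Sun) — equivalently, the $I_\Delta$-adic filtration on $\O_{X\times X}/I_\Delta^{[p]}\cong F^*F_*\O_X$ — and its graded pieces are the truncated symmetric powers $T^j\Omega_X$, which are quotients of $\Sym^j\Omega_X$ with \emph{no} Frobenius twist. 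This is exactly how the paper argues: since $F$ is finite surjective, $\B_X$ is ample iff $F^*\B_X$ is; $F^*\B_X=V_1$ is an iterated extension of the bundles $T^j\Omega_X$, $1\leq j\leq n(p-1)$, each ample when $\Omega_X$ is; and one concludes by the extension result in \cite[Corollary~3.4]{HartshorneVB}.

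The direction you call ``the crux'' ($\B_X$ ample $\Rightarrow \Omega_X$ ample) is not proven at all — you only sketch a hoped-for propagation of ampleness through $Z^1$ and the operator $\theta$ — but in fact you have the difficulty upside-down: once one passes to $F^*F_*\O_X$, this is the \emph{easy} direction. In the canonical decreasing filtration, $\Omega_X=T^1\Omega_X\cong V_1/V_2$ is a \emph{quotient} of $V_1=F^*\B_X$, not a subsheaf; so ampleness of $\B_X$ gives ampleness of $F^*\B_X$ (pullback along the finite morphism $F$), hence of its quotient $\Omega_X$, with no appeal to $Z^1$ or the higher Cartier sequences. The single idea missing from your proposal, and it repairs both directions at once, is to apply $F^*$ \emph{before} filtering: that is what makes the filtration canonical (the local order filtration on $F_*\O_X$ itself cannot be glued, precisely because $F_*\O_X$ need not split off $\B_X$ unless $X$ is Frobenius split, whereas $F^*F_*\O_X$ always splits as $\O_X\oplus F^*\B_X$), and it is what places $\Omega_X$ on the quotient side of $\B_X$ rather than the sub side.
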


In fact, we give versions of the two preceding theorems for nefness as well, via the exact same techniques.

\begin{rem}
In fact, everything in the paper works just as well if one replaces $F$ by $F^e$ for any $e\geq 1$, and considers the $e$-th Frobenius cokernel $\B_X^e := \coker(\O_X \to F_*^e \O_X)$. We have stuck to the case $e=1$ for simplicity, but the reader is welcome to substitute $p^e$, $F^e_*$, and $\B_X^e$ for $p$, $F_*$, and $\B_X$ throughout.
\end{rem}

\begin{rem}
As we will note in \cref{diffops}, positivity of $(\B_X^e)^\vee$ is closely related to the positivity of sheaves of level-$e$ differential operators in characteristic $p$. Thus, one way to view the results of \cite{CRP,CROZ} and this note is as a study of the positivity of these sheaves of differential operators.
\end{rem}

\subsection*{Acknowledgments}
The author was supported by EUR2023-143443 funded by MCIN/AEI/10.13039/\allowbreak 501100011033 and by FSE “invest in your future”, PID2021-125052NA-I00, funded by MCIN/AEI/10.13039/501100011033.
We thank Javier Carvajal-Rojas and Eamon Quinlan-Gallego for helpful comments on an earlier draft of this note.

\section{Preliminaries}
Here we give a brief overview of the necessary background material. 

\subsection{The Frobenius cokernel}
Let $X$ be a variety over an algebraically closed field $k$ of characteristic $p>0$. We write $F:X\to X$ for the (absolute) Frobenius on $X$. 
There is a canonical map of $\O_X$-modules
$$
\O_X \to F_* \O_X,
$$
which is injective since $X$ is reduced.

\begin{dfn}
The Frobenius cokernel, denoted $\B_X$, is
the cokernel of $\O_X\to F_* \O_X$.
\end{dfn}

\begin{rem}
Much of this note refers to the dual vector bundle $\B_X^\vee$.
Although we will not make use of this perspective here, $\B_X^\vee$ can be identified with the kernel of the Frobenius trace map $F_* (\omega_X^{1-p}) \to \O_X$. Thus, $\B_X^\vee$ is referred to as the Frobenius trace kernel. For more on this, see \cite[Section~2]{CRP}.
\end{rem}

\begin{rem}
The central question of \cite{CRP} is the ampleness of $\B_X^\vee$, or equivalently the antiampleness of $\B_X$.
We note the following facts about the positivity of $\B_X^\vee$ established in \cite{CRP}:
\begin{itemize}
\item (\cite[Proposition~5.9]{CRP}) If $X$ is a smooth projective variety  with $\B_X^\vee$ ample, then $X$ is Fano.
\item (\cite[Theorem~5.19]{CRP}) If $X$ is a threefold with $\B_X^\vee$ ample, then $\rho(X)=1$ .
\item (\cite[Proposition~4.2]{CRP}) If $f:X\to S$ is the blowup of a smooth variety $S$ along a smooth subvariety of codimension $>1$, then $\B_X^\vee$ is not ample.
\item (\cite[Proposition 5.14]{CRP}) If $X,S$ are smooth, $f:X\to S$ is a proper morphism with $f_*\O_X=\O_Y$, and the general fiber of $f$ is smooth, then if $\B_X^\vee$ is ample, then the general fiber of $f$ is 0-dimensional.
\end{itemize}
\end{rem}

\subsection{Cartier operators}
\label{Cartier}

Here we recall the notation and terminology of \cite[Section~2]{CRP}.
Let $X$ be a smooth variety over a perfect field $k$ of characteristic $p>0$. The de Rham complex $\Omega_X^\bullet$ is not $\O_X$-linear, but the Frobenius pushforward $F_* \Omega_X^\bullet$ is. 
There are  are canonical isomorphisms of $\O_X$-modules
$$
C\inv: \Omega_X^i \to \mathcal H^i(F_* \Omega_X^\bullet)
$$
(see \cite[Theorem~5.1]{Katz} for details), with inverse $C$, called the Cartier isomorphism.
If we write $\B_X^i :=  \im(F_*d: F_*\Omega_X^{i-1} \to F_*\Omega_X^{i})$ and $\mathcal Z_X^i:= \ker(F_* d: F_* \Omega_X^i \to F_* \Omega_X^{i+1})$
, then the isomorphism  $C$ gives rise to a short exact sequence of locally free $\O_X$-modules
$$
0 \to \B_X^i \to \mathcal Z_X^i \to \Omega_X^i \to 0.
$$

\begin{dfn}
The surjection $ \mathcal Z_X^i \to \Omega_X^i$ is called the Cartier operator $\k_i$.
\end{dfn}

We note for use in \cref{proofWTX} that there is a short exact sequence of locally free $\O_X$-modules 
$$
0\to \mathcal Z_X^i \to F_* \Omega_X^i \xra{F_* d} \B_X^{i+1} \to 0.
$$

\begin{rem}
Note that $\B_X^1$ is the Frobenius cokernel $\B_X$ defined above, since 
the inclusion
$ \mathcal Z_0 \hookrightarrow F_* \O_X $
is the inclusion of the $p$-th powers in $\O_X$, i.e., is just the Frobenius $\O_X\to F_*\O_X$,
 and thus
the
cokernel is $\B_X^1$
\end{rem}

\begin{rem}
In the discussion above, one can replace $F_*$ by $F_*^e$, and obtain the $e$-th Frobenius cokernel $\B_{X,e}^{i}$ and the $e$-th Cartier operator $\k_i^e: \mathcal Z_{X,e}^{i} \to \Omega_X^i$.
We will discuss only the case $e=1$, but our discussion below applies equally well to higher $e$.
\end{rem}

\subsection{Truncated symmetric powers}

We make no claims to originality anywhere in this section. For detailed references on the material here, see \cite{DotyWalker} in the representation-theoretic context, or \cite{Sun} in the algebraic-geometric context.
See also \cite[Section~2.2]{GaoRaicu} for similar calculations.

\begin{dfn}
Let $V$ be a vector space of dimension $c$. For any $l$, there is a subspace
$$
T^l(V) \subset V^{\otimes l}
$$
called the $l$-th truncated symmetric power of $V$, defined as follows:
$S_l$ acts on $V^{\otimes l}$ by permuting the factors.
For $k_1,\dots,k_m$, we set
$$
v(k_1,\dots,k_c)
= \sum_{\sigma \in S_l}(e_1^{\otimes k_1}\otimes\dots\otimes e_c^{\otimes k_c} ) \cdot \sigma \in V^{\otimes \sum k_i};
$$
 $T^l(V)$ is the subspace of $V^{\otimes l}$ generated by the elements of the form
$
v(k_1,\dots,k_c)
$
for $k_1+\dots+k_c = l$.
\end{dfn}

\begin{rem}
\label{remtrunc}
In other words, $T^l(V)$ is the image of the map
$$
v: V^{\otimes l} \to V^{\otimes l};
$$
note that $v$ clearly factors through $S^l(V)$, the $l$-th symmetric power of $V$.
Thus, for any $l$, there is a surjection $S^l(V) \to T^l(V)$.
One can check that 
the resulting map $S^l(V) \to T^l(V)$ 
sends
an element
$$
e_1^{a_1}\cdots e_c^{a_c} 
$$
to zero if 
$a_i\geq p$ for some $i$,
and thus for dimension reasons
the kernel of this surjection is generated by the elements of the form
$ e_1^{a_1}\cdots e_c^{a_c} $
with some $a_i\geq p$.
In other words, $T^l(V)$ is the cokernel of the canonical multiplication map
$$
F^* V \otimes
S^{l-p}(V) \to S^l(V),
$$
i.e., the quotient of $S^l(V)$ by the ideal generated by the image of $F^* V\otimes S^{l-p}(V)$.
\end{rem}

Note that $T^{c(p-1)+1}(V) = 0$, since any monomial of degree $c(p-1)+1$ must have some $a_i \geq p$.
We can identify the last nonzero truncated power with a more familiar object:

\begin{lem}
\label{vector}
Let $V$ be a vector space of dimension $c$. There is a canonical isomorphism
$$
T^{c(p-1)}(V) \cong (\det V)^{p-1}
$$
\end{lem}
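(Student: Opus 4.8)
The plan is to first pin down the dimension of $T^{c(p-1)}(V)$ and then upgrade the resulting abstract one-dimensional space to a canonical identification with $(\det V)^{p-1}$ by exploiting naturality in $V$. Using the quotient description from \Cref{remtrunc}, $T^{c(p-1)}(V)$ is the degree-$c(p-1)$ piece of the quotient of $S^\bullet(V)$ by the ideal generated by the $p$-th powers of elements of $V$ (the image of Frobenius $F^*V\hookrightarrow S^p(V)$), equivalently by the $e_i^p$. A monomial $e_1^{a_1}\cdots e_c^{a_c}$ of this degree survives in the quotient precisely when every $a_i\leq p-1$; since $\sum a_i = c(p-1)$, this forces $a_i=p-1$ for all $i$. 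Hence $T^{c(p-1)}(V)$ is one-dimensional, spanned by the class of $e_1^{p-1}\cdots e_c^{p-1}$.

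Next I would make this identification canonical by observing that the construction $V\mapsto T^{c(p-1)}(V)$ is functorial in $V$ and in particular $\mathrm{GL}(V)$-equivariant, being a natural subquotient of $V^{\otimes c(p-1)}$. Thus $T^{c(p-1)}(V)$ is a one-dimensional rational representation of $\mathrm{GL}(V)=\mathrm{GL}_c$. Every such representation is an integer power of the determinant, since $\mathrm{Hom}(\mathrm{GL}_c,\mathbb{G}_m)=\mathbb{Z}\cdot\det$ in any characteristic. To read off the exponent I would restrict to the diagonal torus: $\mathrm{diag}(t_1,\dots,t_c)$ scales the class of $e_1^{p-1}\cdots e_c^{p-1}$ by $\prod_i t_i^{p-1}=(\det)^{p-1}$, and since the restriction of characters of $\mathrm{GL}_c$ to the torus is injective, the representation is exactly $(\det V)^{p-1}$. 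As both $T^{c(p-1)}(V)$ and $(\det V)^{p-1}$ are one-dimensional with the same character, they are isomorphic functorially in $V$, which is the asserted canonical isomorphism.

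The main obstacle is canonicity rather than mere isomorphism of two one-dimensional spaces. The tempting explicit map, namely the composite $T^{c(p-1)}(V)\hookrightarrow V^{\otimes c(p-1)}\to(\bigwedge^c V)^{\otimes(p-1)}$ of the inclusion with blockwise antisymmetrization, is in fact identically zero once $c\geq 2$: the generator is symmetric under transposing two factors lying in the same block of $c$, while antisymmetrization sends it to its negative, forcing the image to vanish. This is precisely why I would route the argument through the equivariance and character computation, which produces the identification uniformly in $p$ and $c$ (and specializes correctly to the base case $c=1$, where $v(p-1)=(p-1)!\,e_1^{\otimes(p-1)}=-e_1^{\otimes(p-1)}$ is nonzero by Wilson's theorem). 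If a hands-on natural map is desired, one instead appeals to the Gorenstein structure of the truncated algebra $T^\bullet(V)$, whose socle in top degree $c(p-1)$ is canonically $(\det V)^{p-1}$, as in the references cited above.
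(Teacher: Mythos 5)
Your proposal is correct and follows essentially the same route as the paper: both identify $T^{c(p-1)}(V)$ as one-dimensional, spanned by the class of $e_1^{p-1}\cdots e_c^{p-1}$, and then obtain canonicity by showing this line transforms under $\mathrm{GL}(V)$ by the character $\det^{p-1}$, matching $(\det V)^{p-1}$. Your character-theoretic justification (every one-dimensional rational representation of $\mathrm{GL}_c$ is a power of $\det$, with the exponent read off on the diagonal torus) is a cleaner substantiation of the change-of-basis claim the paper simply asserts, and your remark that the naive blockwise antisymmetrization map vanishes for $c\geq 2$ is a worthwhile observation.
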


\begin{proof}
Let $e_1,\dots,e_c$ be a basis of $V$. Then $T^{c(p-1)}(V)$ is generated by the monomials $e_1^{a_1}\cdots e_c^{a_c}$ where $0\leq a_i < p$ for all $i$ and $\sum a_i = c(p-1)$. The only such monomial is $e_1^{p-1} e_2^{p-1} \cdots e_c^{p-1}$, which generates a one-dimensional vector space. The isomorphism sends this monomial to $(e_1\wedge e_2 \wedge \cdots \wedge e_c)^{p-1}$.

To see that this is canonical, note that if we change the basis $e_i$ by an invertible matrix $A$, then both $e_1^{p-1} e_2^{p-1} \cdots e_c^{p-1}$ and $(e_1\wedge e_2 \wedge \cdots \wedge e_c)^{p-1}$
 change by $\det(A)^{p-1}$,
\end{proof}

Globalizing \Cref{vector} gives the following:

\begin{cor}
\label{global}
Let $X$ be a variety and $G$ a vector bundle of rank $c$ on $X$. Then there is a canonical isomorphism of vector bundles
$$
T^{c(p-1)}(G) \cong (\det G)^{p-1}.
$$
\end{cor}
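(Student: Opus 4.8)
The plan is to deduce the global statement from the fiberwise isomorphism of \Cref{vector} by exploiting the functoriality of both constructions, so that the canonical isomorphism over a point extends first to a trivializing open cover and then glues to a global isomorphism. First I would note that both sides are constructed functorially in $G$ in a way that commutes with restriction to opens. Indeed, by \Cref{remtrunc} the truncated power $T^{c(p-1)}(V)$ is the cokernel of the multiplication map $F^* V \otimes S^{c(p-1)-p}(V) \to S^{c(p-1)}(V)$; since the symmetric powers, the Frobenius pullback, and the formation of cokernels of a morphism of vector bundles are all local and compatible with pullback, this presents $T^{c(p-1)}(G)$ globally as the cokernel
$$
\coker\bigl(F^*G \otimes S^{c(p-1)-p}(G) \to S^{c(p-1)}(G)\bigr),
$$
which is a vector bundle (locally a line bundle by \Cref{vector}). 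Likewise $(\det G)^{p-1}$ is a line bundle, and both are compatible with the transition functions of $G$.

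Next I would choose an open cover $\{U_\alpha\}$ of $X$ over which $G$ trivializes, with frames $e_1^\alpha,\dots,e_c^\alpha$. Applying \Cref{vector} to the free module on each $U_\alpha$ produces isomorphisms
$$
\phi_\alpha\colon T^{c(p-1)}(G)|_{U_\alpha} \xra{\ \sim\ } (\det G)^{p-1}|_{U_\alpha},
\qquad
(e_1^\alpha)^{p-1}\cdots (e_c^\alpha)^{p-1} \mapsto (e_1^\alpha \wedge \cdots \wedge e_c^\alpha)^{p-1}.
$$
The only point requiring verification is that the $\phi_\alpha$ agree on overlaps, and this is precisely the canonicity established in the proof of \Cref{vector}: if the frames over $U_\alpha$ and $U_\beta$ differ by a transition matrix $A$ on $U_\alpha\cap U_\beta$, then the generator $e_1^{p-1}\cdots e_c^{p-1}$ of $T^{c(p-1)}$ and the generator $(e_1\wedge\cdots\wedge e_c)^{p-1}$ of $(\det)^{p-1}$ both scale by $\det(A)^{p-1}$. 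Hence $\phi_\alpha$ and $\phi_\beta$ differ by the same cocycle on both sides, so they glue to a global isomorphism $T^{c(p-1)}(G)\cong(\det G)^{p-1}$.

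I do not expect a serious obstacle: all the mathematical content sits in \Cref{vector}, and the corollary is a formal consequence of its naturality. The one thing to state carefully is the meaning of ``canonical''—namely that the fiberwise isomorphism is equivariant under change of frame, transforming by $\det(A)^{p-1}$—since this is exactly what makes the cocycle computation match. Equivalently, and perhaps more cleanly, one can phrase the entire argument intrinsically by observing that $T^{c(p-1)}(-)$ and $(\det -)^{p-1}$ are the vector bundles associated to the frame bundle of $G$ via the two $GL_c$-representations $T^{c(p-1)}(k^c)$ and $(\det k^c)^{p-1}$, and that \Cref{vector} supplies a $GL_c$-equivariant isomorphism between these representations, which therefore descends to the associated bundles.
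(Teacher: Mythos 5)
Your proposal is correct and matches the paper's approach exactly: the paper proves this corollary simply by ``globalizing'' \Cref{vector}, relying on the same canonicity (equivariance by $\det(A)^{p-1}$ under change of frame) that you use to glue the local isomorphisms. Your write-up just makes explicit the cocycle check and the associated-bundle reformulation that the paper leaves implicit.
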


\subsection{Duality and ampleness of Frobenius pushforwards}
\label{easylemmas}

Here, we prove some easy lemmas regarding duality and ampleness of Frobenius pushforwards.

\begin{lem}
\label{dual}
Let $Z$ be a Cohen--Macaulay projective variety over an $F$-finite field of characteristic $p>0$, and let $E$ be a vector bundle on $X$.
Then
$$
F_*(E)^\vee \cong F_*(E^\vee \otimes \omega_Z^{1-p}).
$$
\end{lem}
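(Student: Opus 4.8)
The plan is to deduce this from Grothendieck--Serre duality for the finite morphism $F\colon Z\to Z$. Since the ground field is $F$-finite and $Z$ is of finite type, $F$ is finite, so the upper-shriek functor $F^!$ is computed by sheaf-$\mathrm{Hom}$ and duality takes the concrete form
$$
\mathcal Hom_{\O_Z}(F_* E,\mathcal G)\cong F_*\,\mathcal Hom_{\O_Z}(E, F^!\mathcal G)
$$
for coherent $\mathcal G$. Taking $\mathcal G=\O_Z$ and using that $E$ is locally free (so that $\mathcal Hom_{\O_Z}(E,-)=E^\vee\otimes-$) reduces the entire statement to the single identification $F^!\O_Z\cong\omega_Z^{1-p}$: granting that, the right-hand side becomes $F_*(E^\vee\otimes\omega_Z^{1-p})$, which is exactly the claim.

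To compute $F^!\O_Z$, I would use that $Z$ is Cohen--Macaulay, so its dualizing complex is a single coherent sheaf $\omega_Z$ placed in one degree, and that a dualizing complex is preserved by $F^!$, i.e. $F^!\omega_Z\cong\omega_Z$ after cancelling the common shift. Writing $F^!\mathcal G\cong F^*\mathcal G\otimes F^!\O_Z$ for the twist by the relative dualizing sheaf and feeding in $\mathcal G=\omega_Z$ gives $\omega_Z\cong F^*\omega_Z\otimes F^!\O_Z$. Since Frobenius pullback raises a line bundle to its $p$-th power, $F^*\omega_Z\cong\omega_Z^{\otimes p}$, and solving yields $F^!\O_Z\cong\omega_Z^{1-p}$, the relative dualizing sheaf of the Frobenius.

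The main obstacle — and really the only content — is keeping this computation honest at the level of sheaves rather than complexes. First, for the displayed duality to be an isomorphism of \emph{sheaves} (not merely an isomorphism in the derived category) one must know the higher $\mathcal Ext^{>0}_{\O_Z}(F_* E,\O_Z)$ vanish; this holds because $F_* E$ is maximal Cohen--Macaulay, being the finite pushforward of a vector bundle from a Cohen--Macaulay scheme. Second, one must verify that $Z$ Cohen--Macaulay forces the dualizing object to be a sheaf (so no stray cohomological shifts survive), and that $\omega_Z$ is invertible so that the twist $F^*\omega_Z\otimes F^!\O_Z$ and the power $\omega_Z^{1-p}$ are meaningful.

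Alternatively, since the assertion is local on $Z$ and both sides are coherent, I could bypass the derived formalism and argue affinely: on $Z=\mathrm{Spec}\,R$ with $R$ Cohen--Macaulay and $F$ the inclusion of $p$-th powers, write $S=R$ viewed as an $R$-algebra through $F$, so that $E$ is an $S$-module and $F_*(E)^\vee=\mathrm{Hom}_R(E,R)$. The restriction--coinduction adjunction $\mathrm{Hom}_R(E,R)\cong\mathrm{Hom}_S\!\big(E,\mathrm{Hom}_R(S,R)\big)$ then reduces everything to the computation $\mathrm{Hom}_R(S,R)\cong\omega_Z^{1-p}$, which follows from $\mathrm{Hom}_R(S,\omega_R)\cong\omega_S$ together with the same identity $F^*\omega_Z\cong\omega_Z^{\otimes p}$. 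Either route isolates the identical key fact, so I would present the Grothendieck-duality version and cite a standard duality reference for the identification $F^!\O_Z\cong\omega_Z^{1-p}$.
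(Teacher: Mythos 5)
Your proposal is correct and is essentially the paper's proof: both rest on Grothendieck duality for the finite morphism $F$ together with the identity $F^*\omega_Z \cong \omega_Z^{\otimes p}$ for the invertible dualizing sheaf. The paper merely packages the computation of $F^!\O_Z \cong \omega_Z^{1-p}$ differently, applying duality with target $\omega_Z$ (where $F^!\omega_Z \cong \omega_Z$) and then twisting by $\omega_Z^{-1}$ via the projection formula, which is the same calculation you perform to identify $F^!\O_Z$.
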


\begin{proof}
By Grothendieck duality  for the finite morphism $F$, we have
$ \SHom(F_* E, \omega_Z) \cong F_* \SHom(E,  \omega_Z)$,
Thus we have
$$
\displaylines{
F_*(E)^\vee = \SHom(F_* E, \O_Z) \cong \SHom(F_* E, \omega_Z) \otimes \omega_Z^{-1} 
\hfill\cr\hfill
\cong F_* \SHom(E,  \omega_Z) \otimes \omega_Z^{-1} 
\cong F_*(E^\vee \otimes \w_Z )\otimes \w_Z^{-1}\cong F_*(E^\vee \otimes \omega_Z^{1-p}).
\qedhere
}
$$
\end{proof}

\begin{lem}
\label{dualample}
If $E$ is an ample vector bundle on a Cohen--Macaulay projective variety $Z$ of dimension $>0$, then $\Hom(E,\O_Z)=0$.
\end{lem}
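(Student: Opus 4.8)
The plan is to interpret $\Hom(E,\O_Z)=H^0(Z,E^\vee)$ and to show this group vanishes by a dévissage to the case of a curve. I would deliberately avoid any Serre-duality/vanishing approach (e.g.\ rewriting $H^0(Z,E^\vee)$ as $H^n(Z,E\otimes\omega_Z)^\vee$ and invoking a Le Potier--type vanishing theorem), since Kodaira-type vanishing can fail in characteristic $p$; the curve argument outlined below is entirely characteristic-free.

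First I would take a nonzero homomorphism $\phi\colon E\to\O_Z$, regard it as a nonzero global section of the locally free sheaf $E^\vee$, and use that $Z$ is integral to find a closed point $x_0$ at which $\phi$ is nonzero on fibers, i.e.\ $\phi\otimes k(x_0)\neq 0$ (the nonvanishing locus is open and contains the generic point, hence is dense). Embedding $Z$ projectively and cutting by general hyperplanes through $x_0$, I would choose an integral curve $C\subset Z$ through $x_0$ (replacing an irreducible component through $x_0$ by its reduced structure if necessary). Since fiber evaluation at $x_0$ commutes with restriction, $\phi|_C\colon E|_C\to\O_C$ is again nonzero, while $E|_C$ remains ample because the restriction of an ample bundle to a closed subvariety is ample. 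This reduces the statement to the claim that an ample bundle on an integral projective curve admits no nonzero map to the structure sheaf.

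For the curve case I would pass to the normalization $\v\colon\wtilde C\to C$. As $\v$ is finite and surjective, $\v^*(E|_C)$ is ample on the smooth curve $\wtilde C$, and as $\v$ is birational the pulled-back map $\v^*\phi\colon \v^*E\to\O_{\wtilde C}$ is still nonzero (it is nonzero at the common generic point). On the smooth curve $\wtilde C$ the image of $\v^*\phi$ is a nonzero ideal sheaf, hence of the form $\O_{\wtilde C}(-D)$ for some effective divisor $D$; thus $\v^*E$ admits the quotient line bundle $\O_{\wtilde C}(-D)$, of degree $-\deg D\le 0$. This contradicts the standard criterion that an ample bundle on a smooth projective curve has all quotient line bundles of strictly positive degree, which finishes the proof.

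The main obstacle is not any single deep theorem but the bookkeeping needed to ensure that nonvanishing of $\phi$ survives both the restriction to $C$ and the pullback to $\wtilde C$; the integrality of $Z$ and $C$ (so that a nonzero section is nonzero at the generic point) is exactly what makes this possible. I would also note that the Cohen--Macaulay hypothesis plays no role in this lemma—it is merely inherited from the ambient setting of \Cref{dual}—and that the only external inputs are the facts that ampleness is preserved under restriction to subvarieties and under pullback by finite morphisms, together with the quotient-degree characterization of ampleness on smooth curves.
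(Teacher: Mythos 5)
Your proof is correct, but it takes a genuinely different route from the paper's. The paper stays global: it also starts from the observation that the image of a nonzero map $E\to \O_Z$ is an ideal sheaf $J$, but then amplifies this to nonzero maps $\Sym^m E \twoheadrightarrow \Sym^m J \twoheadrightarrow J^m \hookrightarrow \O_Z$ for every $m$, rewrites $\Hom(\Sym^m E,\O_Z)$ via Serre duality as $H^{\dim Z}(\Sym^m E\otimes \omega_Z)^\vee$, and derives a contradiction with the asymptotic vanishing $H^{>0}(\Sym^m E\otimes \omega_Z)=0$ for $m\gg 0$ that characterizes ample bundles. That is where the Cohen--Macaulay hypothesis enters (Serre duality in this form), and---contrary to your motivating worry---the vanishing invoked is Serre-type asymptotic vanishing for ample bundles, not Kodaira-type vanishing, so the paper's argument is also characteristic-free. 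Your d\'evissage to an integral curve through a point where the section of $E^\vee$ is nonzero on fibers, followed by normalization and the degree argument (a quotient line bundle of an ample bundle is ample, hence of positive degree on a smooth curve), avoids both duality and the symmetric-power amplification; each step you use (restriction and finite pullback preserve ampleness, quotients of ample bundles are ample) is elementary and standard. What your approach buys is precisely what you note: the lemma holds for an arbitrary projective variety, with no Cohen--Macaulay assumption; what the paper's approach buys is brevity given the standard cohomological facts, and no need for the bookkeeping about generic points, components, and normalizations that your reduction requires.
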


\begin{proof}
If there is a nonzero homomorphism $E \to \O_X$, then there is a nonzero surjection $\Sym^m E \to \O_X$ for all $m$: If the image of $E$ is a nonzero ideal sheaf $J\subset \O_Z$, then 
$\Sym^m E\to \Sym^m J $ is surjective by right-exactness of symmetric powers. There's always a multiplication surjection $\Sym^m J \to J^m$, and thus we obtain a nonzero homomorphism 
$$\Sym^m E \twoheadrightarrow \Sym^m J \twoheadrightarrow J^m \hookrightarrow \O_Z.$$
Thus, we have that
$$
H^0((\Sym^m E)^\vee) \cong \Hom(\Sym^m E,\O_Z) \neq 0.
$$
However, by Serre duality we have
$$
H^0((\Sym^m E)^\vee) = H^{\dim Z}(\Sym^m E \otimes \omega_Z)^\vee
$$
and thus
$H^{\dim Z}(\Sym^m E \otimes \omega_Z) \neq 0$ for all $m$.
But if $\dim Z>0$, this cohomology group vanishes for $m\gg 0$ by ampleness of $E$.
\end{proof}

The following lemma says essentially that if $F_* L$ is ample on a Fano variety, then $L$ is quite positive;  it is easy to see that if $F_*L$ is ample then so is $L$, but the following is a bit stronger. For example, it says that $F_* \O_{\P^n}(d)$ is ample only if $d>(n+1)(p-1)$.

\begin{lem}
\label{pushample}
Let $L$ be a line bundle on a smooth projective variety $Z$ over an $F$-finite field of characteristic $p>0$ such that $F_*L$ is ample.
Then $L^\vee \otimes \omega_Z^{1-p}$ is not effective.
\end{lem}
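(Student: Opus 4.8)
The plan is to prove the statement directly: assuming $F_*L$ is ample, I would show that the line bundle $M := L^\vee \otimes \omega_Z^{1-p}$ has no nonzero global sections, which is exactly the assertion that $M$ is not effective. The two ingredients already in hand are tailor-made for this: \Cref{dual}, which computes the dual of a Frobenius pushforward, and \Cref{dualample}, which says an ample bundle admits no nonzero homomorphism to $\O_Z$. The whole argument should amount to threading these two lemmas together.

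Concretely, first I would apply \Cref{dual} with $E = L$ (a line bundle being a rank-one vector bundle, and $Z$ being smooth, hence Cohen--Macaulay), obtaining the isomorphism $(F_*L)^\vee \cong F_*(L^\vee \otimes \omega_Z^{1-p}) = F_* M$. Next, I would use that the absolute Frobenius $F$ is a finite morphism which is a homeomorphism on underlying topological spaces, so that pushing forward by $F$ does not change global sections; combining this with the previous isomorphism gives $H^0(Z, M) \cong H^0(Z, F_* M) \cong H^0(Z, (F_*L)^\vee) = \Hom(F_*L, \O_Z)$. Thus effectivity of $M$ is equivalent to the existence of a nonzero homomorphism $F_*L \to \O_Z$.

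Finally, since $F_*L$ is ample and $Z$ has positive dimension, \Cref{dualample} gives $\Hom(F_*L,\O_Z) = 0$, whence $H^0(Z, M) = 0$ and $M$ is not effective, as desired. I expect no serious obstacle here: the content is entirely packaged in the two preceding lemmas, and the only points requiring care are bookkeeping ones---checking that the hypotheses of \Cref{dual} (Cohen--Macaulayness, which follows from smoothness) and of \Cref{dualample} (positive-dimensionality of $Z$, the implicit standing assumption) are met, and recalling that $F_*$ leaves $H^0$ unchanged because the Frobenius is the identity on the topological space. The example of $\P^n$ in the preamble, where $\omega_{\P^n}^{1-p} = \O((n+1)(p-1))$ converts the non-effectivity of $\O(-d + (n+1)(p-1))$ into the stated bound $d > (n+1)(p-1)$, is a useful sanity check that the duality twist by $\omega_Z^{1-p}$ has been applied correctly.
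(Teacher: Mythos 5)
Your proof is correct and matches the paper's own argument essentially step for step: both combine \cref{dual} (Grothendieck duality for the Frobenius) with \cref{dualample}, together with the fact that $F_*$ preserves global sections, to conclude that $H^0(L^\vee\otimes\omega_Z^{1-p}) = \Hom(F_*L,\O_Z) = 0$. The only differences are cosmetic---the order in which the two lemmas are invoked, and your explicit remark about the positive-dimensionality hypothesis of \cref{dualample}, which the paper leaves implicit.
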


\begin{proof}
If $F_*L $ is an ample vector bundle, we must have $\Hom(F_*L,\O_Z)=0$ by \cref{dualample}.
Again, we use Grothendieck duality to write this as 
$$
H^0(F_*(L^\vee \otimes \omega_Z^{1-p})) = H^0(L^\vee \otimes \omega_Z^{1-p}).
$$
Thus, we must have that $H^0(L^\vee \otimes \omega_Z^{1-p})$, i.e., $L^\vee \otimes \omega_Z^{1-p}$ is not effective.
\end{proof}

\begin{rem}
Similarly, if $F_*L$ is nef and $A$ is any ample vector bundle, then $(F_* L) \otimes A = F_*(L\otimes A^p)$ is ample and so $H^0((L\otimes A^p)^\vee  \otimes \omega_Z^{1-p})=0$.
In particular, $F_* \O_{\P^n}(d)$ is nef only if $d> (n+1)(p-1)-p = n(p-1)-1$, or equivalently $d\geq n(p-1)$;  moreover, it is easy to compute that this example is sharp (at least when $p>n$).
\end{rem}

Finally, we combine these lemmas in the following special case:

\begin{cor}
\label{amplenoteff}
Let $Z$ be a smooth projective variety over an $F$-finite field of characteristic $p>0$. 
If $F_*(L)^ \vee$ is ample, then $L$ is not effective.
\end{cor}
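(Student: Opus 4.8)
The plan is to deduce this directly by chaining together \cref{dual} and \cref{pushample}, with the various $\omega_Z^{1-p}$ twists conspiring to cancel. First I would apply \cref{dual} to the bundle $E = L$ (taking $Z$ itself as the Cohen--Macaulay, indeed smooth, projective variety), which gives the identification
$$
F_*(L)^\vee \cong F_*\!\bigl(L^\vee \otimes \omega_Z^{1-p}\bigr).
$$
Setting $M := L^\vee \otimes \omega_Z^{1-p}$, the hypothesis that $F_*(L)^\vee$ is ample is then exactly the statement that $F_* M$ is ample.

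Next I would apply \cref{pushample} to the line bundle $M$. The hypotheses match: $Z$ is smooth projective over an $F$-finite field of characteristic $p>0$, and $F_* M$ is ample. The conclusion of that lemma is that $M^\vee \otimes \omega_Z^{1-p}$ is not effective.

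It then remains only to simplify this last bundle. Unwinding the definition of $M$, we compute
$$
M^\vee \otimes \omega_Z^{1-p}
= \bigl(L^\vee \otimes \omega_Z^{1-p}\bigr)^\vee \otimes \omega_Z^{1-p}
= L \otimes \omega_Z^{p-1} \otimes \omega_Z^{1-p}
= L,
$$
so the non-effectivity of $M^\vee \otimes \omega_Z^{1-p}$ is precisely the assertion that $L$ is not effective, which is what we wanted.

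I do not expect any genuine obstacle here: the corollary is a formal consequence of the two preceding lemmas, and the only thing to watch is the bookkeeping of the canonical twists, making sure the $\omega_Z^{p-1}$ coming from dualizing $M$ cancels the $\omega_Z^{1-p}$ supplied by \cref{pushample}. (The hidden positive-dimensionality hypothesis needed by \cref{dualample}, which \cref{pushample} relies on, is harmless, since for $\dim Z = 0$ the statement is vacuous.)
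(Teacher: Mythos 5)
Your proposal is correct and follows exactly the paper's own argument: apply \cref{dual} to identify $F_*(L)^\vee$ with $F_*(L^\vee\otimes\omega_Z^{1-p})$, then apply \cref{pushample} to $M = L^\vee\otimes\omega_Z^{1-p}$ and observe that the canonical twists cancel to leave $L$. The bookkeeping of the $\omega_Z^{\pm(p-1)}$ factors is carried out correctly, so nothing more is needed.
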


\begin{proof}
If $(F_*L)^\vee$ is ample,
then by \cref{dual} $F_*(L)^\vee = F_*(L^\vee \otimes \omega_Z^{1-p})$ is ample, and so by \cref{pushample} we have that $(L^\vee \otimes \omega_Z^{1-p})^\vee \otimes \omega_Z^{1-p} = L$ is not effective.
\end{proof}

\section{An obstruction to ampleness of $\B_X^\vee$}
Here, we state and prove the main theorem of the introduction.

\begin{thm}
\label{main}
Let $X$ be a smooth projective variety over a field of characteristic $p>0$. If $X$ has a smooth positive-dimensional subvariety $Z$ such that $\det( N_{Z/X})^{1-p}$ is effective, then $\B_X^\vee$ is not ample.
\end{thm}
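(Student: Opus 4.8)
The plan is to restrict the defining sequence of $\B_X$ to $Z$ and to read off from it a subsheaf whose positivity is incompatible with the ampleness of $\B_X^\vee|_Z$. Write $i:Z\hookrightarrow X$ for the inclusion, let $I$ be its ideal sheaf, and let $c$ be the codimension of $Z$ (so $c\geq 1$, as $Z$ is a proper positive-dimensional subvariety). Since $\O_X,F_*\O_X$ and $\B_X$ are locally free, the sequence $0\to\O_X\to F_*\O_X\to\B_X\to 0$ is locally split, so restricting along $i$ remains exact and yields
$$
0\to\O_Z\to i^*F_*\O_X\to\B_X|_Z\to 0,
$$
where the first map is again the unit section. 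Everything hinges on understanding the middle term $i^*F_*\O_X$.

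First I would analyze $i^*F_*\O_X$ in local coordinates $x_1,\dots,x_n$ with $Z=\{x_{d+1}=\cdots=x_n=0\}$. Using that the $\O_X$-module structure on $F_*\O_X$ is the Frobenius-twisted one, one finds $i^*F_*\O_X\cong\O_X/I^{[p]}$ with $I^{[p]}=(x_{d+1}^p,\dots,x_n^p)$, the induced $\O_Z$-structure being again twisted (a lift $\bar g$ acting by multiplication by $g^p$). Filtering by degree in the normal variables $x_{d+1},\dots,x_n$ produces a filtration of $i^*F_*\O_X$ whose associated graded pieces are $F_*\bigl(T^l(N_{Z/X}^\vee)\bigr)$ for $0\le l\le c(p-1)$: the degree-$l$ normal monomials span the $l$-th truncated symmetric power of the conormal bundle $N_{Z/X}^\vee$, while the twisted structure in the tangential directions contributes the Frobenius pushforward. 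The top quotient ($l=0$) is $F_*\O_Z$, through which the unit section factors (as the Frobenius $\O_Z\to F_*\O_Z$), and the bottom subsheaf ($l=c(p-1)$) is, by \cref{global},
$$
S:=F_*\bigl(T^{c(p-1)}(N_{Z/X}^\vee)\bigr)\cong F_*\bigl((\det N_{Z/X})^{1-p}\bigr).
$$
I expect the main obstacle to be exactly this bookkeeping of the Frobenius-twisted module structure: one must check the twist lands on $F_*\bigl((\det N_{Z/X})^{1-p}\bigr)$ rather than, say, $F_*\O_Z\otimes(\det N_{Z/X})^{1-p}$, since the latter would demand effectivity of $(\det N_{Z/X})^{p(1-p)}$ instead of $(\det N_{Z/X})^{1-p}$. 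Getting this identification right is what makes \cref{global} the correct tool and what matches the effectivity hypothesis of the theorem.

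Granting the structure of $S$, the conclusion is immediate. Since the image of $\O_Z$ maps isomorphically onto the top graded quotient $F_*\O_Z$, it meets the bottom subsheaf $S$ (which sits in normal degree $c(p-1)\geq 1$) trivially, so $S$ injects into $\B_X|_Z$. Because $F_*$ is affine, taking global sections gives
$$
H^0(Z,S)=H^0\bigl(Z,(\det N_{Z/X})^{1-p}\bigr)\neq 0
$$
by hypothesis, whence $H^0(Z,\B_X|_Z)\neq 0$, i.e. $\Hom(\B_X^\vee|_Z,\O_Z)\neq 0$. If $\B_X^\vee$ were ample then its restriction $\B_X^\vee|_Z$ to the positive-dimensional smooth projective $Z$ would be ample, and \cref{dualample} would force $\Hom(\B_X^\vee|_Z,\O_Z)=0$, a contradiction. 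Hence $\B_X^\vee$ is not ample. (Equivalently, one may dualize $S\hookrightarrow\B_X|_Z$ to a surjection $\B_X^\vee|_Z\twoheadrightarrow S^\vee$ and apply \cref{amplenoteff} on $Z$ with $L=(\det N_{Z/X})^{1-p}$ to reach the same contradiction.)
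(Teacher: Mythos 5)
Your proof is correct, and its structural core is the same as the paper's: restrict to $Z$, identify $(F_*\O_X)|_Z$ with $F_*(\O_X/I^{[p]})$, filter $I$-adically, and use truncated symmetric powers together with \cref{global} to recognize the bottom piece of the filtration as $F_*\bigl((\det N_{Z/X})^{1-p}\bigr)$; this is precisely the content of \cref{filt} (stated there for $I/I^{[p]}$ rather than $\O_X/I^{[p]}$, an immaterial difference) and your flagged subtlety about where the Frobenius twist lands is resolved exactly as you expect. Where you genuinely diverge is the endgame. The paper dualizes: it verifies that the inclusion of the bottom piece has locally free cokernel (part \eqref{last} of \cref{filt}), so that dualizing gives a surjection onto $F_*\bigl((\det N_{Z/X})^{1-p}\bigr)^\vee$ (\cref{surj}), and then feeds this ample quotient through a chain of Grothendieck-duality lemmas (\cref{dual}, \cref{pushample}, \cref{amplenoteff}) to contradict effectivity. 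You never dualize the inclusion: effectivity hands you a nonzero global section of $S=F_*\bigl((\det N_{Z/X})^{1-p}\bigr)$, hence of $\B_X|_Z$ (since $S$ meets the unit copy of $\O_Z$ trivially), and \cref{dualample} alone gives the contradiction. This buys real economy: your route avoids Grothendieck duality entirely and needs no locally-free-cokernel check, using only the filtration and the elementary fact that an ample bundle on a positive-dimensional variety admits no nonzero map to $\O_Z$. What the paper's longer route buys is a set of reusable statements about ampleness of Frobenius pushforwards (\cref{pushample}, \cref{amplenoteff}), which have independent interest beyond this theorem.

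Two small points. First, your phrase ``the image of $\O_Z$ maps isomorphically onto the top graded quotient $F_*\O_Z$'' is inaccurate: the composite $\O_Z \to F_*\O_Z$ is the Frobenius, which is injective but not an isomorphism. Injectivity is all you need --- since $S$ lies in the kernel of the projection to $F_*\O_Z$, injectivity forces $\O_Z \cap S = 0$ --- so the argument survives unchanged. Second, your parenthetical alternative (dualizing $S \hookrightarrow \B_X|_Z$ to a surjection $\B_X^\vee|_Z \twoheadrightarrow S^\vee$) is not free of charge: dualizing an injection of vector bundles yields a surjection only when the cokernel is locally free, which is exactly the verification in part \eqref{last} of \cref{filt} that your main argument lets you skip.
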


\begin{lem}
Let $Z\subset X$ be a subvariety of a variety $X$, defined by the ideal sheaf $I \subset \O_X$.
There is a short exact sequence of $\O_Z$-modules
$$
0 \to F_*(I/I^{[p]}) \to (F_*\O_X)\res Z \to F_* \O_Z \to 0,
$$
and this descends to a short exact sequence 
$$
0 \to F_*(I/I^{[p]}) \to \B_X\res Z \to \B_Z \to 0.
$$
\end{lem}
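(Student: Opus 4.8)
The plan is to deduce both sequences from the exactness of $F_*$ (valid since $F$ is finite, hence affine) together with careful bookkeeping of the Frobenius-twisted module structure on $F_*\O_X$. For the first sequence I would begin by identifying the restriction $(F_*\O_X)\res Z = F_*\O_X\otimes_{\O_X}\O_Z$. Working locally, with $I$ corresponding to an ideal $\mathfrak a$ in the coordinate ring $R$, recall that $F_*\O_X$ is $R$ equipped with the action $r\cdot s = r^p s$; hence the submodule $I\cdot F_*\O_X$ is generated by the $p$-th powers $a^p$ with $a\in\mathfrak a$, i.e.\ is exactly the Frobenius power $I^{[p]}$ (locally $\mathfrak a^{[p]}$) viewed inside $F_*\O_X$. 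This yields the identification $(F_*\O_X)\res Z\cong F_*(\O_X/I^{[p]})$. Now the inclusions $I^{[p]}\subseteq I\subseteq\O_X$ give a short exact sequence of $\O_X$-modules
$$0\to I/I^{[p]}\to \O_X/I^{[p]}\to \O_Z\to 0,$$
and applying the exact functor $F_*$—after which the twisted $I$-action annihilates every term, since $I$ now acts by $p$-th powers—produces a short exact sequence of $\O_Z$-modules, which is the first sequence.

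For the second sequence I would compare the sequence just obtained with the trivial short exact sequence $0\to 0\to \O_Z\xra{\mathrm{id}}\O_Z\to 0$ via Frobenius maps. The middle vertical map $g\colon \O_Z\to (F_*\O_X)\res Z$ is the restriction to $Z$ of the Frobenius $\O_X\to F_*\O_X$ (locally $\bar s\mapsto\overline{s^p}$), and the right vertical map $h\colon\O_Z\to F_*\O_Z$ is the Frobenius of $Z$ itself; the resulting square commutes because both composites carry $\bar s$ to $\overline{s^p}$. Restricting the defining sequence $\O_X\to F_*\O_X\to\B_X\to 0$ to $Z$ and using right-exactness of $-\otimes_{\O_X}\O_Z$ identifies $\coker g\cong \B_X\res Z$, while $\coker h=\B_Z$ by definition and the left vertical map has cokernel $F_*(I/I^{[p]})$. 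The snake lemma applied to this map of short exact sequences then yields the six-term exact sequence
$$0\to\ker g\to\ker h\to F_*(I/I^{[p]})\to \B_X\res Z\to \B_Z\to 0.$$

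The decisive step—and the one I expect to be the only real subtlety—is to show the connecting homomorphism vanishes, so that this collapses to the desired three-term sequence. This follows from $\ker h=0$: since $Z$ is a variety, hence reduced, its Frobenius $h\colon\O_Z\to F_*\O_Z$ is injective. Then $\ker g=0$ as well, and the six-term sequence reduces to
$$0\to F_*(I/I^{[p]})\to\B_X\res Z\to\B_Z\to 0,$$
as claimed. Beyond this, the one point demanding care is the local computation in the first step—correctly recognizing that the ideal $I$ acts on the twisted module $F_*\O_X$ through its Frobenius power $I^{[p]}$, which is precisely what makes the three middle terms $\O_Z$-modules and drives the whole argument.
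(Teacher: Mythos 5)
Your proposal is correct and takes essentially the same route as the paper: both obtain the first sequence by applying the exact functor $F_*$ to $0\to I/I^{[p]}\to \O_X/I^{[p]}\to \O_Z\to 0$ and identifying the middle term with $(F_*\O_X)|_Z$, and both deduce the second sequence from the first by comparing Frobenius maps in a commutative diagram and chasing (your snake-lemma packaging, with the connecting map killed by injectivity of $\O_Z\to F_*\O_Z$ coming from reducedness of $Z$, is the same chase the paper performs). If anything, you spell out details the paper leaves implicit, notably the local verification that $I\cdot F_*\O_X = F_*(I^{[p]})$ and the identification $\coker g\cong \B_X|_Z$ via right-exactness of restriction.
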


\begin{proof}
Take the obvious short exact sequence
$ 0 \to I/I^{[p]} \to \O_X/I^{[p]} \to \O_Z \to 0, $
and push this forward by $F_*$ to get
$ 0 \to F_*(I/I^{[p]}) \to F_* (\O_X/I^{[p]}) \to F_* \O_Z \to 0.  $
The middle term is just $F_*(\O_X) \otimes \O_X/I$, i.e.,  we have the desired short exact sequence
$$
0 \to F_*(I/I^{[p]}) \to (F_*\O_X)\res Z \to F_* \O_Z \to 0.
$$
Now, 
we have a commutative diagram
$$
\begin{tikzcd}
 &  & \O_Z \ar[r, equal] \ar[d, hook] & \O_Z \ar[d, hook] \\
0 \arrow{r} & F_*(I/I^{[p]}) \arrow{r} \arrow{d} & (F_*\O_X)\res Z \arrow{r} \ar[d,two heads] & F_* \O_Z \arrow{r} \ar[d,two heads] & 0 \\
0 \arrow{r} & K \ar[r] & \B_X\res Z \arrow{r} & \B_Z \arrow{r} & 0
\end{tikzcd}
$$
where $K$ is the kernel of the surjection $\B_X\res Z \to \B_Z$. A diagram chase shows that $F_*(I/I^{[p]})\to K$ is bijective, giving the second short exact sequence.
\end{proof}

\begin{cor}
\label{smooth}
If $Z$ is a smooth subvariety of a smooth variety $X$, then $F_*(I/I^{[p]})$ is a vector bundle on $Z$, and there is a surjection of vector bundles $\B_X\res Z^\vee \to F_*(I/I^{[p]})^\vee$.
In particular, if $\B_X^\vee$ is ample, so is $F_*(I/I^{[p]})^\vee$.
\end{cor}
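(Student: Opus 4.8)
The plan is to reduce everything to the short exact sequence
$$0 \to F_*(I/I^{[p]}) \to \B_X\res Z \to \B_Z \to 0$$
from the preceding lemma, once I know that all three terms are vector bundles; then dualizing and invoking the standard hereditary properties of ampleness finishes the job.

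First I would check that $I/I^{[p]}$ is a vector bundle on $Z$. This is \'etale-local on $Z$, and since $Z\hookrightarrow X$ is a smooth closed embedding of codimension $c$, \'etale-locally the embedding is the zero section of (the total space of) the normal bundle, so I may assume $\O_X=\O_Z[t_1,\dots,t_c]$ with $I=(t_1,\dots,t_c)$ and hence $I^{[p]}=(t_1^p,\dots,t_c^p)$. Then $\O_X/I^{[p]}=\O_Z[t_1,\dots,t_c]/(t_1^p,\dots,t_c^p)$ is free over $\O_Z$ on the monomials $t^a$ with $0\le a_i<p$, and $I/I^{[p]}$ is the free $\O_Z$-submodule spanned by those monomials with $\sum a_i>0$, of rank $p^c-1$. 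Because $Z$ is smooth over a perfect field, $F:Z\to Z$ is finite and flat (regularity of $Z$, by Kunz), so $F_*$ sends vector bundles to vector bundles; thus $F_*(I/I^{[p]})$ is a vector bundle on $Z$.

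With that in hand, all three sheaves in the sequence are vector bundles: $\B_Z$ is locally free since $Z$ is smooth, and $\B_X\res Z$ is locally free as the restriction of the vector bundle $\B_X$. Dualizing the short exact sequence of vector bundles then yields
$$0 \to \B_Z^\vee \to (\B_X\res Z)^\vee \to F_*(I/I^{[p]})^\vee \to 0,$$
and in particular the surjection $(\B_X\res Z)^\vee=(\B_X^\vee)\res Z\twoheadrightarrow F_*(I/I^{[p]})^\vee$ asserted in the statement.

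Finally, if $\B_X^\vee$ is ample then its restriction $(\B_X^\vee)\res Z$ to the closed subvariety $Z$ is again ample, and $F_*(I/I^{[p]})^\vee$ is ample as a quotient of an ample vector bundle. The only step requiring genuine care is the first one: establishing the local freeness and rank of $I/I^{[p]}$ via the \'etale-local normal-bundle model of the embedding; everything afterward is the formal bookkeeping of dualizing a sequence of bundles together with the standard facts that ampleness passes to restrictions and to quotients.
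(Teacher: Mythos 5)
Your proof is correct, and its second half — dualizing the short exact sequence $0 \to F_*(I/I^{[p]}) \to \B_X\res Z \to \B_Z \to 0$ (using that $\B_Z$ is locally free, so the sequence is locally split and dualizes exactly) and then invoking that ampleness passes to restrictions and to quotients — is exactly the paper's argument. Where you diverge is the vector-bundle claim for $F_*(I/I^{[p]})$. The paper gets this for free from the \emph{other} exact sequence in the preceding lemma, $0 \to F_*(I/I^{[p]}) \to (F_*\O_X)\res Z \to F_*\O_Z \to 0$: since $X$ and $Z$ are smooth, $(F_*\O_X)\res Z$ and $F_*\O_Z$ are vector bundles, so $F_*(I/I^{[p]})$ is the kernel of a surjection of vector bundles, hence a vector bundle. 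You instead compute in an \'etale-local model $\O_X=\O_Z[t_1,\dots,t_c]$, $I=(t_1,\dots,t_c)$, identify $I/I^{[p]}$ as free of rank $p^c-1$, and apply Kunz. This works, but it carries hidden obligations your one-line justification glosses over: the precise \'etale-local statement is the existence of an \'etale map to the pair $(\A^n,\A^{n-c})$, and promoting that to a product structure $\O_X/I^{[p]}\cong \O_Z[t]/(t^{[p]})$ on the $p$-th infinitesimal neighborhood needs an extra (standard) argument; one must also check that the canonical Frobenius-twisted $\O_Z$-structure on $F_*(I/I^{[p]})$ agrees with pushing forward your ``naive'' coordinate module along $F_Z$, and that local freeness descends along the \'etale cover. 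These are all routine — indeed the paper performs exactly this kind of \'etale/complete-local reduction in \cref{filt}, whose content your computation largely anticipates — but the paper's soft kernel-of-a-surjection argument avoids them entirely, at the cost of yielding no explicit rank or basis, which your computation provides.
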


\begin{proof}
Note that $F_* \O_Z$ and $F_* \O_X \res Z$ are vector bundles on $Z$, since $X$ and  $Z$ are smooth. Thus, $F_*(I/I^{[p]})$ is the kernel of a surjection of vector bundles, hence a vector bundle. Dualizing the short exact sequence gives the desired surjection (here, we use that $\B_Z$ is locally free so that the dual of an injection is a surjection). Finally, use that  $\B_X\res Z^\vee $ is the restriction of an ample vector bundle to $Z$, hence ample, and that quotients of ample vector bundles are ample.
\end{proof}

If we can find a smooth subvariety $Z$ such that $F_*(I/I^{[p]})^\vee$ is not ample, then $\B_X^\vee$ cannot be ample. To this end, we study the structure of $I/I^{[p]}$ as an $\O_Z$-module.

\begin{prop}
\label{filt}
Let $Z\subset X$ be a smooth codimension-$c$ subvariety of a smooth variety $X$, defined by the ideal sheaf $I \subset \O_X$.
The $\O_X$-module $G_Z := I/I^{[p]}$
has $I$-adic filtration
$$
I^{c(p-1)} G_Z \subset I^{c(p-1)-1} G_Z \subset  \cdots \subset I^2 G_Z \subset I G_Z \subset G_Z
$$
such that:
\begin{enumerate}
\item $I^{c(p-1)}G_Z = 0$.
\item $G_Z/IG_Z \cong I/I^2$, the conormal bundle of $Z$ in $X$.
\item \label{last} The induced map of $\O_Z$-modules $F_*(I^{c(p-1)-1} G_Z) \to F_*(I/I^{[p]}) $ has locally free cokernel.
\item The quotients $I^i G_Z / I^{i+1} G_Z $ are isomorphic to the $\O_Z$-modules $T^{i+1} (I/I^2)$, and in particular $I^{c(p-1)-1} G_Z \cong T^{c(p-1)}(I/I^2)\cong (\det(I/I^2))^{p-1}$ 
\end{enumerate}
\end{prop}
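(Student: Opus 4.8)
The plan is to establish the filtration and verify the four properties by a local computation in a regular system of parameters, checking that the identifications are canonical so that they globalize over $Z$. Since $Z$ is smooth of codimension $c$ in the smooth variety $X$, the ideal $I$ is locally generated by a regular sequence $x_1,\dots,x_c$ that is part of a regular system of parameters; then $I^{[p]}=(x_1^p,\dots,x_c^p)$ and $I^j/I^{j+1}\cong S^j(I/I^2)$ is free on the degree-$j$ monomials in the $x_i$. The starting point is the identification $I^iG_Z=(I^{i+1}+I^{[p]})/I^{[p]}$, immediate from $I^i\cdot(I/I^{[p]})=(I^{i+1}+I^{[p]})/I^{[p]}$, which produces the claimed decreasing filtration. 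Part~(1) then follows by pigeonhole: every monomial $x^\alpha$ of degree $c(p-1)+1$ in $c$ variables has some exponent $\alpha_j\geq p$, so $I^{c(p-1)+1}\subseteq I^{[p]}$ and hence $I^{c(p-1)}G_Z=(I^{c(p-1)+1}+I^{[p]})/I^{[p]}=0$.

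The heart of the argument is part~(4). The inclusion $I^{i+1}\subseteq I^{i+1}+I^{[p]}$ induces a surjection $S^{i+1}(I/I^2)=I^{i+1}/I^{i+2}\twoheadrightarrow I^iG_Z/I^{i+1}G_Z=(I^{i+1}+I^{[p]})/(I^{i+2}+I^{[p]})$, and a short diagram chase shows its kernel is the image of $I^{[p]}\cap I^{i+1}$ in $I^{i+1}/I^{i+2}$. To identify this kernel without reference to a basis, I would use that the $p$-th power map $f\mapsto f^p$ is $p$-semilinear and so factors through an $\O_X$-linear surjection $F^*I\to I^{[p]}$; passing to associated graded identifies the kernel with the image of the canonical multiplication map $F^*(I/I^2)\otimes S^{i+1-p}(I/I^2)\to S^{i+1}(I/I^2)$. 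By \Cref{remtrunc}, the cokernel of precisely this map is $T^{i+1}(I/I^2)$, giving the isomorphism $I^iG_Z/I^{i+1}G_Z\cong T^{i+1}(I/I^2)$.

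Part~(2) is the case $i=0$, since $T^1(V)=V$ recovers the conormal bundle $I/I^2$; and the ``in particular'' assertion is the case $i=c(p-1)-1$ combined with part~(1): as $I^{c(p-1)}G_Z=0$, we obtain $I^{c(p-1)-1}G_Z\cong T^{c(p-1)}(I/I^2)$, which is $(\det(I/I^2))^{p-1}$ by \Cref{global}. For part~(3), I would use that $F$ is finite and hence $F_*$ is exact, so the cokernel of $F_*(I^{c(p-1)-1}G_Z)\to F_*(I/I^{[p]})$ is $F_*(G_Z/I^{c(p-1)-1}G_Z)$. The $I$-adic filtration presents $G_Z/I^{c(p-1)-1}G_Z$ as an iterated extension whose graded pieces are the $\O_Z$-modules $T^{i+1}(I/I^2)$ for $0\leq i\leq c(p-1)-2$; on such $\O_Z$-modules the absolute Frobenius pushforward agrees with the Frobenius pushforward on the smooth variety $Z$ and is therefore locally free, and an iterated extension of vector bundles is a vector bundle.

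I expect the canonical identification in part~(4) to be the main obstacle: the monomial computation is transparent but basis-dependent, and the real work lies in matching the image of $I^{[p]}$ in the symmetric power with the image of the multiplication map defining $T^{i+1}(I/I^2)$ in a way that is functorial in $I/I^2$, and thus compatible with the globalization of \Cref{global}.
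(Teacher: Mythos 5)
Your proposal is correct and, in outline, matches the paper's proof: both reduce to a local model in which $I$ is generated by a regular sequence, identify the graded pieces of the $I$-adic filtration with truncated symmetric powers via the description in \Cref{remtrunc}, and rely on canonicity of those identifications to globalize. Two differences are worth recording. First, for part (3) the paper argues \'etale-locally in the model $X=\Spec k[x_1,\dots,x_n]$, $I=(x_1,\dots,x_c)$, where $F_*(I/I^{[p]})$ has an explicit monomial basis and the image of $F_*(I^{c(p-1)-1}G_Z)$ is spanned by a subset of that basis, so the cokernel is visibly free; that argument is independent of part (4). Your argument---exactness of $F_*$, the identification of $F_{X*}M$ with $F_{Z*}M$ for sheaves annihilated by $I$, local freeness of Frobenius pushforwards of bundles on the smooth variety $Z$, and stability of local freeness under extensions---is more structural and equally valid, at the cost of making (3) depend on (4) (which is fine, since you establish (4) first). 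Second, in part (4) your phrase ``passing to associated graded'' conceals the one genuine computation: you need that the image of $I^{[p]}\cap I^{i+1}$ in $I^{i+1}/I^{i+2}$ is exactly the degree-$(i+1)$ piece of the ideal generated by the classes $\bar x_j^{\,p}$ in $\mathrm{gr}_I\O_X\cong \Sym_{\O_Z}(I/I^2)$, equivalently that $I^{[p]}\cap I^{i+1}\subseteq \sum_j x_j^p I^{i+1-p}+I^{i+2}$. This is true---the $\bar x_j^{\,p}$ form a regular sequence in the polynomial ring $\mathrm{gr}_I\O_X$, so the initial ideal of $I^{[p]}$ is generated by the initial forms of its generators, a Valabrega--Valla-type statement---but since you work Zariski-locally with a regular system of parameters rather than in a polynomial model with honest monomials, this step is not a formal manipulation and should be stated and justified; the paper sidesteps it by carrying out the whole computation in the \'etale-local model, where $G_Z$ is free on monomials. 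With that lemma supplied, your proof is complete.
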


\begin{proof}
All claims are \'etale- or complete-local, as long as the identifications made are canonical, and so
we may assume that $X=\Spec k[x_1,\dots,x_n]$ and $I=(x_1,\dots,x_c)$.
Then $G_Z := I/I^{[p]}$ is a free $\O_Z$-module with basis the monomials $x_1^{a_1}\cdots x_c^{a_c}$ where $0\leq a_i < p$ for all $i$ and at least one $a_i$ is not zero. Note that the maximal degree of such a monomial is $c(p-1)$.

To see (1), note that
$I^{i-1} G_Z = I^i/I^{[p]}$ is generated by those monomials $x_1^{a_1}\cdots x_c^{a_c}$ with $\sum a_i \geq i$, and since $I^{c(p-1)+1}\subset I^{[p]}$, we have $I^{c(p-1)} G_Z = 0$.
For (2), 
$ G_Z / I G_Z \cong I/(I^{[p]}+I^2) = I/I^2$, since $I^{[p]} \subset I^2$.

For (3), we note $F_*(I/I^{[p]})$ is locally free by \Cref{smooth}; in our setting here, a basis is given by the monomials $x_1^{a_1}\cdots x_n^{a_n}$, where $0\leq a_i < p$ for all $i$ and at least one $a_i$ is not zero for $1\leq i \leq c$. The image of $F_*(I^{c(p-1)-1} G_Z)$ in $F_*(I/I^{[p]})$ is generated by the monomials $x_1^{p-1} x_2^{p-1} \cdots x_c^{p-1}\cdot x_{c+1}^{a_{c+1}}\cdots x_n^{a_n}$ with $0
\leq a_{c+i}< p$ for $i>0$; since these are just a subset of the basis elements of the free $\O_Z$-module $F_*(I/I^{[p]})$, the cokernel is also free.

For (4),
note that
$I^i G_Z / I^{i+1} G_Z$ is generated by those monomials $x_1^{a_1}\cdots x_c^{a_c}$ with $\sum a_i = i+1$ and $a_i < p$; 
this can be viewed in a basis-independent way as $\Sym^{i+1}(I/I^2)$ modulo the image under multiplication of 
$$F^*(I/I^2)\otimes \Sym^{i+1-p}(I/I^2),$$
and thus per the description in \Cref{remtrunc} is (canonically) isomorphic to the $(i+1)$-st truncated symmetric power of the conormal bundle $I/I^2$. 
\end{proof}

\begin{cor}
\label{surj}
There is a surjection of $\O_Z$-modules $F_*(I/I^{[p]})^\vee  \to F_*(\det(N_{Z/X})^{1-p})^\vee $, obtained by dualizing the inclusion of \eqref{last} of \cref{filt}.
\end{cor}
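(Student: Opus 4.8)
The plan is to read the surjection off directly from the bottom layer of the $I$-adic filtration of $G_Z := I/I^{[p]}$ constructed in \cref{filt}, so that the statement becomes essentially formal once that proposition is in hand. First I would observe that, since $I^{c(p-1)} G_Z = 0$ by part (1), the bottom nonzero piece of the filtration is $I^{c(p-1)-1} G_Z$, and that part (4) identifies it canonically as $I^{c(p-1)-1} G_Z \cong (\det(I/I^2))^{p-1}$. Using the conormal--normal duality $I/I^2 = N_{Z/X}^\vee$, one has $\det(I/I^2) = (\det N_{Z/X})^{-1}$, and hence
$$
I^{c(p-1)-1} G_Z \cong (\det N_{Z/X})^{1-p}
$$
as line bundles on $Z$.

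Next I would push the inclusion $I^{c(p-1)-1} G_Z \hookrightarrow G_Z = I/I^{[p]}$ forward by $F_*$. Since $F$ is a finite (in particular affine) morphism, $F_*$ is exact, so this produces the inclusion of $\O_Z$-modules
$$
F_*(I^{c(p-1)-1} G_Z) \hookrightarrow F_*(I/I^{[p]}),
$$
which is exactly the map appearing in part \eqref{last} of \cref{filt}. The content of that part is that its cokernel is locally free; together with the fact that both source and target are already vector bundles (the target by \cref{smooth}, the source being the line bundle above), this realizes $F_*(I^{c(p-1)-1} G_Z)$ as a \emph{subbundle}, i.e.\ gives a short exact sequence of vector bundles
$$
0 \to F_*(I^{c(p-1)-1} G_Z) \to F_*(I/I^{[p]}) \to Q \to 0
$$
with $Q$ locally free.

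Finally I would dualize. Because every term is locally free, applying $\SHom(-,\O_Z)$ preserves exactness and reverses the arrows, yielding in particular a surjection
$$
F_*(I/I^{[p]})^\vee \twoheadrightarrow F_*(I^{c(p-1)-1} G_Z)^\vee \cong F_*\bigl((\det N_{Z/X})^{1-p}\bigr)^\vee,
$$
where the last isomorphism is the line-bundle identification from the first step (applying $F_*(-)^\vee$ to it). This is precisely the claimed surjection.

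Since the substantive work---the filtration itself, the local freeness of the relevant cokernel, and the identification of the bottom graded piece---has all been carried out in \cref{filt}, I do not expect any genuine obstacle here. The only point that requires care is that dualization preserves surjectivity, and this is guaranteed exactly by the local freeness of the cokernel established in part \eqref{last}; without that input one would only get a map, not a surjection.
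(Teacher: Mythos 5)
Your proposal is correct and follows essentially the same route as the paper: identify the bottom nonzero piece of the $I$-adic filtration with $(\det(I/I^2))^{p-1} = (\det N_{Z/X})^{1-p}$ via part (4) of \cref{filt} (equivalently \Cref{global}), then dualize the inclusion from part \eqref{last}, using the local freeness of its cokernel to conclude that the dual map is surjective. The paper compresses this into one sentence; your write-up merely makes explicit the point that local freeness of the cokernel is what guarantees exactness of the dualized sequence, which is exactly the role part \eqref{last} plays.
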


\begin{proof}
By \Cref{global}, $I^{c(p-1)} G_Z \cong (\det(I/I^2))^{p-1}$, so dualizing the inclusion of \eqref{last} of \cref{filt} (and rewriting $(I/I^2)^{p-1}$ as $N_{Z/X}^{1-p}$) gives the desired surjection.
\end{proof}

Putting this all together with the results of \cref{easylemmas} yields the main theorem:

\begin{proof}[Proof of \Cref{main}]
By \cref{smooth}, if $\B_X^\vee$ is ample so is $F_*(I/I^{[p]})^\vee$.
Thus, by \Cref{surj} so would be its quotient $F_*(\det(N_{Z/X}^\vee)^{p-1})^\vee$. By \cref{amplenoteff}, this would imply that $\det(N_{Z/X})^{1-p}$ is not effective, a contradiction.
\end{proof}

We record the following corollary, since it will be useful:

\begin{cor}
\label{curveobstruction}
If $X$ contains a subspace $Z$ isomorphic to $\P^{r}$  such that $-K_X^{n-r}.Z \leq r+1 $ then $\B_X^\vee$ is not ample.
In particular, if $X$ contains a smooth rational curve $C$ such that $-K_X.C \leq 2$ 
\end{cor}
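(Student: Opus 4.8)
The plan is to deduce this directly from \Cref{main}: it suffices to verify that the hypothesis of that theorem holds, i.e.\ that $\det(N_{Z/X})^{1-p}$ is effective, and then to quote \Cref{main} verbatim. Since $Z\cong\P^r$, every line bundle on $Z$ is $\O_Z(d)$ for a unique integer $d$, and $\O_Z(d)$ is effective precisely when $d\ge 0$. So the entire question reduces to computing one integer and reading off its sign, and the numerical hypothesis is exactly what controls that sign.

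First I would identify $\det N_{Z/X}$ by adjunction. Taking top exterior powers in the conormal sequence $0\to N_{Z/X}^\vee\to \Omega_X|_Z\to\Omega_Z\to 0$ gives $\omega_X|_Z\cong\omega_Z\otimes(\det N_{Z/X})^\vee$, hence $\det N_{Z/X}\cong\omega_Z\otimes(\omega_X^\vee)|_Z$. Letting $\ell\subset Z$ be a line and measuring degrees against $\ell$, and using $\omega_{\P^r}\cong\O_{\P^r}(-(r+1))$, I obtain $\deg_\ell\det N_{Z/X}=(-K_X)\cdot\ell-(r+1)$; that is, $\det N_{Z/X}\cong\O_Z(d)$ with $d=(-K_X)\cdot\ell-(r+1)$, where $(-K_X)\cdot\ell$ is the degree of $-K_X$ along a line of $Z$, which is the content of the numerical hypothesis.

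Since $1-p<0$, the line bundle $\det(N_{Z/X})^{1-p}\cong\O_Z\bigl((1-p)d\bigr)$ is effective exactly when $d\le 0$, i.e.\ when $(-K_X)\cdot\ell\le r+1$, which is precisely the assumption. Applying \Cref{main} then shows that $\B_X^\vee$ is not ample. For the ``in particular'' clause I would specialize to $r=1$: here $Z=C\cong\P^1$ is the line $\ell$ itself, so the condition becomes $(-K_X)\cdot C\le 2$, as claimed.

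The computation is entirely routine, and I do not expect a genuine obstacle, since \Cref{main} already carries all the substantive positivity input. The only two points that require care are bookkeeping ones: keeping track of the sign introduced by the negative exponent $1-p$, so that effectivity of $\det(N_{Z/X})^{1-p}$ translates into $d\le 0$ rather than $d\ge 0$; and confirming that the stated intersection number is the degree of $-K_X$ restricted to a line in $Z\cong\P^r$, so that it correctly specializes to $-K_X\cdot C\le 2$ in the curve case.
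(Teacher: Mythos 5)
Your proposal is correct and follows essentially the same route as the paper: compute $\det N_{Z/X}$ via adjunction (the paper dualizes, using the tangent/normal sequence rather than the conormal sequence, but this is the same computation), observe that the negative exponent $1-p$ turns the degree condition $d\le 0$ into effectivity of $\det(N_{Z/X})^{1-p}$ on $\P^r$, and then apply \Cref{main}. Your care in reading the intersection-number hypothesis as the degree of $-K_X$ along a line of $Z$ matches the paper's intent (its exponent notation is loose), and the specialization to $r=1$ is handled identically.
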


\begin{proof}
The adjunction sequence $0\to T_Z\to T_X\res Z \to N_{C/X}\to 0$ implies that 
the line bundle $ \det(N_{C/X}) $
has degree $-K_X^{c}.Z -(r+1)$, and if this is $\leq 0$ then the line bundle $\det(N_{C/X})^{1-p}$ on $Z\cong \P^r$ has nonnegative degree and thus is effective. The result then follows from \Cref{main}.
In particular, if $Z\cong \P^1$ this says exactly that if $-K_X.C \leq 2$ then $\det(N_{C/X})^{1-p}$ is effective.
\end{proof}

\section{Fano threefolds}

\begin{thm}
\label{Fano3}
Let $X$ be a Fano threefold over an algebraically closed field of characteristic $p$. $\B_X^\vee$ is ample if and only if $X$ is $\P^3$ or the quadric threefold in characteristic $p\neq 2$.
\end{thm}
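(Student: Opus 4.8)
The plan is to prove the two directions separately, leaning almost entirely on the curve-based obstruction of \Cref{curveobstruction}. The backbone of the argument is the classification of Fano threefolds: every smooth Fano threefold other than $\P^3$ and the quadric threefold contains a line or a conic with respect to $-K_X$, i.e.\ a smooth rational curve $C$ with $-K_X.C\leq 2$. So the forward-looking strategy is to show that these two geometric families ($\P^3$ and the quadric) really are the only exceptions, and that $\B_X^\vee$ is in fact ample for them.

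First I would dispose of the ``not ample'' direction, which is the bulk of the cases. Suppose $X$ is a Fano threefold that is neither $\P^3$ nor the quadric threefold. I would invoke the structure theory of Fano threefolds to produce a smooth rational curve $C\subset X$ with $-K_X.C\leq 2$: this is where I would cite the existence of lines and conics on Fano threefolds (via the classification, or via bend-and-break / the boundedness of the index). By \Cref{curveobstruction}, such a curve forces $\det(N_{C/X})^{1-p}$ to be effective, and hence $\B_X^\vee$ is not ample by \Cref{main}. The only subtlety is ensuring such a low-degree rational curve always exists; for index-$1$ and index-$2$ Fanos one has lines ($-K_X.C=1$) or conics ($-K_X.C=2$), and for the higher-index cases one reduces to $\P^3$ and the quadric directly since those are the only Fano threefolds of index $\geq 3$ (the quadric has index $3$, $\P^3$ index $4$).

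For the converse I need to show $\B_X^\vee$ \emph{is} ample for $\P^3$ and for the quadric threefold in characteristic $p\neq 2$; presumably this has already been established in \cite{CRP} (they note both examples explicitly), so I would simply cite those computations rather than redo them. It remains to explain why the quadric is excluded in characteristic $2$: here $-K_X$ being divisible forces the relevant $\det(N_{Z/X})^{1-p}$ to behave degenerately, or more concretely the quadric threefold in characteristic $2$ contains a line $C$ with $\det(N_{C/X})^{1-p}$ effective (since $1-p=-1$ when $p=2$, the conic/line bound becomes vacuous or the degree computation changes sign), so \Cref{curveobstruction} again obstructs ampleness. I would verify this last sign computation carefully.

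The main obstacle will be the first direction's reliance on the full classification: I must be confident that \emph{every} smooth Fano threefold outside the two exceptions genuinely contains a line or conic. The cleanest route is to cite Mori--Mukai's classification together with the well-known fact that all Fano threefolds of Picard rank $1$ and index $1$ or $2$ carry lines, and to handle higher Picard rank either by the same line/conic existence or by the earlier blowup and fibration obstructions recorded in the remarks from \cite{CRP} (blowups and varieties with nontrivial fibrations already fail to have ample $\B_X^\vee$). The real care is in making the case division exhaustive and pinning down the $p=2$ exception for the quadric.
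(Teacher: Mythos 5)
Your overall strategy (the curve obstruction of \Cref{curveobstruction} plus classification) is the same as the paper's, but there are two genuine gaps. The first is the characteristic-$p$ issue, which you acknowledge but do not resolve: Mori--Mukai's classification and the existence of lines on index-$1$ and index-$2$ Fano threefolds (Shokurov) are characteristic-$0$ theorems, and citing them directly is not legitimate here. This is exactly where the real work of the paper lies. It first reduces to $\rho(X)=1$ via \cite[Theorem~5.19]{CRP}; then for index $1$ with $-K_X$ very ample it uses a characteristic-$p$ line-existence theorem \cite[Theorem~6.6]{liftvanishI}, while for $-K_X$ not very ample it invokes liftability of $X$ to $W(k)$ \cite[Theorem~7.3]{liftvanishII} and applies \cite[Corollary~1.5]{Shokurovline} to the lift; for index $\geq 2$ it avoids any direct conic-existence statement by taking a smooth hyperplane section $Y$ (a del Pezzo surface), producing a $(-1)$-curve $C\subset Y$ when $Y\neq \P^2,\P^1\times\P^1$, and computing $\deg\det N_{C/X}=\deg\det N_{C/Y}+1=0$ from the normal bundle sequence (again lifting to $W(k)$ when $-K_X$ is not very ample, via \cite[Theorem~7.3]{FanoII}). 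Note also that your proposed fallback of bend-and-break only produces rational curves with $-K_X.C\leq \dim X+1=4$, not $\leq 2$, so it cannot substitute for this classification input.

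The second gap is a concrete error: your explanation of why the quadric is excluded in characteristic $2$ is wrong. A line $C$ on the quadric threefold $Q$ has $-K_Q.C=3$ (the index is $3$), hence $\deg\det N_{C/Q}=3-2=1$, so $\det(N_{C/Q})^{1-p}$ has degree $1-p<0$ for \emph{every} prime $p$, including $p=2$, and is never effective. There is no sign change at $p=2$, and \Cref{curveobstruction} never applies to the quadric in any characteristic (as it must not, since the conclusion would contradict ampleness for $p\neq 2$). The exclusion of $p=2$ is not a geometric curve obstruction at all: it comes from the explicit computation of $F_*\O_Q$ in \cite[Corollary~4.8]{CRP}, which shows the trace kernel of the quadric is ample exactly when $p\neq 2$; that reference is what you should cite both for the positive direction (quadric, $p\neq 2$) and for the failure at $p=2$.
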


We note that the arguments of the following proof are certainly immediate to experts, and rely heavily on the characteristic-$p$ classification work done in \cite{liftvanishI,liftvanishII,FanoI,FanoII}.

\begin{proof}
The backwards direction (i.e., that a quadric in characteristic $p\neq 2$ has antiample Frobenius cokernel) is discussed in \cite[Corollary~4.8]{CRP}. It is elementary that $\P^3$ has antiample Frobenius cokernel.

Now, we prove the forwards direction.
By \cite[Theorem~5.19]{CRP}, we may assume that $\rho(X) = 1$.

First, we consider the case  where $-K_X$ is very ample and generates $\Pic(X)$. 
By \cite[Theorem~6.6]{liftvanishI}, if $-K_X$ is very ample, then $X$ contains a line, i.e., a smooth rational curve $C$ with $-K_X.C = 1$, and thus by \Cref{curveobstruction} $\B_X^\vee$ is not ample.

Now, assume $-K_X$ is not very ample but $-K_X$ generates $\Pic(X)$. By \cite[Theorem~7.3]{liftvanishII}, $X$ lifts to $W(k)$, and then by the same argument as \cite[Theorem~6.6]{liftvanishI} $X$ contains a line if its lift does. Since the lift is an index-1 Fano threefold with $-K_X$ not very ample, it is either:
\begin{itemize}
\item 
A Fano variety of type (1-1), which is a double cover of $\P^3$ with branch locus a divisor of degree 6, or equivalently a hypersurface of degree 6 in $\P(1,1,1,1,3)$.
\item A Fano variety of type (1-2), 
which is a double cover of a quadric in $\P^4$ with branch locus a divisor of degree 8.
\end{itemize}
In either case, the variety contains a line by \cite[Corollary~1.5]{Shokurovline}.

Consider the case where the index of $X$ is $>1$.  If $-K_X$ is very ample, then
 taking a smooth hyperplane section  yields a smooth del Pezzo surface $Y$. If $Y$ is not $\P^2$ or $\P^1\times \P^1$, then $Y$ contains a $(-1)$-curve $C$, by the usual classification of del Pezzo surfaces. In this case, the normal bundle sequence
$$
0\to N_{C/Y} \to N_{C/X} \to N_{Y/X}\res C \to 0
$$
implies that $\deg \det(N_{C/X}) = \deg\det N_{C/Y} + 1 = 0$, and thus again by \Cref{curveobstruction} $\B_X^\vee$ is not ample.
But if $Y$ is $\P^2$ or $\P^1\times \P^1$, then $X$ is either $\P^3$ or the quadric threefold (e.g., by index reasons).

If $-K_X$ is not very ample, then by \cite[Theorem~7.3]{FanoII}, $X$ still lifts to a del Pezzo threefold over $W(k)$, and then by the same argument as above $X$ contains a line if its lift does. Since the lift is a del Pezzo threefold with $-K_X$ not very ample, a hyperplane section must be a degree-1 del Pezzo surface, which contains a $(-1)$-curve by the classification of del Pezzo surfaces, and then the argument of the preceding paragraph implies the lift contains a line.
\end{proof}

\section{Complete intersections}

\begin{thm}
\label{CIs}
Let $X\subset \P^n$ be a complete intersection of hypersurfaces of degree $d_1,\dots,d_{c}\geq 2$ over a field of characteristic $p>0$. Assume $\dim X \geq 2$. If $\sum d_i = n-1$ or $n$ then $\B_X^\vee$ is not ample.
\end{thm}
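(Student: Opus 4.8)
The plan is to reduce to \Cref{curveobstruction} by exhibiting a line on $X$. By adjunction, a complete intersection $X\subset\P^n$ of multidegree $d_1,\dots,d_c$ has $-K_X=\O_X(n+1-\sum_i d_i)$, so if $L\cong\P^1$ is a line of $\P^n$ lying on $X$ then $-K_X.L=n+1-\sum_i d_i$, which is $1$ when $\sum d_i=n$ and $2$ when $\sum d_i=n-1$. In both cases $-K_X.L\leq 2$, and since $L$ is a smooth rational curve, \Cref{curveobstruction} immediately gives that $\B_X^\vee$ is not ample. Everything therefore reduces to producing a line on $X$.

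To find lines I would work on the Grassmannian $G$ of lines in $\P^n$, which has dimension $2n-2$, and let $\mathcal S\subset\O_G^{\,n+1}$ be the tautological rank-$2$ subbundle. Under the identification $H^0(G,\Sym^{d_i}\mathcal S^\vee)\cong H^0(\P^n,\O(d_i))$, each defining form $f_i$ becomes a section of $\Sym^{d_i}\mathcal S^\vee$ whose value at a point $[L]$ is the restriction $f_i|_L$; hence the Fano scheme of lines $F_1(X)$ is precisely the zero locus of the section $s=(s_1,\dots,s_c)$ of the globally generated bundle $E:=\bigoplus_i\Sym^{d_i}\mathcal S^\vee$, of rank $r=\sum_i(d_i+1)$. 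The expected dimension is $\delta:=\dim G-r=2n-2-\sum d_i-c$, and using $\dim X=n-c\geq 2$, i.e.\ $c\leq n-2$, one checks $\delta=n-2-c\geq 0$ when $\sum d_i=n$ and $\delta=n-1-c\geq 1$ when $\sum d_i=n-1$; in particular $r\leq\dim G$.

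The crux is to show $s$ vanishes somewhere for \emph{every} such $X$, not just the general one. I would do this through the top Chern class $c_r(E)\in H^*(G)$: a nowhere-vanishing section would exhibit $\O_G$ as a subbundle of $E$ with locally free quotient of rank $r-1$, forcing $c_r(E)=0$, so $c_r(E)\neq 0$ already guarantees that every section of $E$ vanishes somewhere. To see $c_r(E)\neq 0$ it suffices to know that the \emph{general} complete intersection of this multidegree contains a line; this holds by the classical theory of Fano schemes of complete intersections (Debarre and Manivel) exactly because $\delta\geq 0$, and the resulting Fano scheme of lines, a nonempty closed subvariety of $G$, has nonzero (effective) class equal to $c_r(E)$. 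Granting $c_r(E)\neq 0$, every complete intersection of multidegree $(d_1,\dots,d_c)$---in particular our smooth $X$---contains a line, and the first paragraph concludes.

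The main obstacle is precisely this passage from \emph{general} to \emph{arbitrary} $X$: the dimension-count arguments that produce lines only do so for the general complete intersection, whereas the theorem asserts the conclusion for every smooth one. The Chern-class reformulation is what removes this gap, since $c_r(E)\neq 0$ is a statement purely about $G$ and is insensitive to the choice of $s$. As an alternative that avoids any appeal to genericity, one could instead compute directly that $c_r(E)$ is a positive multiple of a Schubert class (for $c=1$ this is the classical enumeration of lines on a hypersurface), but the cycle-class argument above is shorter.
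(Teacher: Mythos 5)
Your proposal is correct and takes essentially the same route as the paper: exhibit a line $L\subset X$, compute $-K_X.L = n+1-\sum d_i \leq 2$ by adjunction, and conclude via \Cref{curveobstruction}. The only difference is in the existence of lines, which the paper dispatches as ``well-known'' with the same Grassmannian dimension count ($2n-2 \geq \sum (d_i+1)$ exactly when $\dim X\geq 2$), while your top-Chern-class argument makes rigorous the passage from the general to an arbitrary smooth complete intersection --- a point the paper leaves implicit (and which is precisely the content of the Debarre--Manivel result you cite, valid in any characteristic).
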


\begin{rem}
Note that by \cite{CRP}, it is also true (but more immediate) that if $d\geq n+1$ then $\B_X^\vee$ is likewise not ample.
\end{rem}

\begin{proof}
Note that if $X$ contains a line $C$ of $\P^n$, then $-K_X.C= n+1 - \sum d_i$, and thus by \Cref{curveobstruction} 
if $\sum d_i \geq n-1$ then $-K_X.C\leq 2$ and thus $\B_X^\vee$ is not ample.
The fact that $X$ contains such a line is well-known: the Grassmannian of lines in $\P^n$ has dimension $2n-2$, and the condition of lying on a complete intersection of hypersurfaces of degrees $d_1,\dots,d_{c}$ is codimension $\sum (d_i+1)=\sum d_i +c$. 
Thus, $X$ will contain a line as long as 
$$
2n-2 \geq \sum d_i + c \geq n + c,
$$
or equivalently as long as $2\leq n-c = \dim X$.
\end{proof}

\begin{rem}
Note that by index reasons, if the index of $X$ is $\geq 3$, then $X$ can contain no $(-K_X)$-lines or conics, and the above argument breaks down. One can instead ask about higher-dimensional linear subvarieties, but a general complete intersection will not contain a linear subvariety of dimension large enough to preclude ampleness of $\B_X^\vee$.
However,
one can show this way (for example) that a cubic fourfold containing a plane cannot have ample $\B_X^\vee$.

We expect (from somewhat limited computational evidence) that no complete intersection other than projective space or a quadric has ample $\B_X^\vee$.
\end{rem}

\section{Cartier operators and antipositivity of~$\Omega^i_X$}

In \cite[Question 1.3]{CRP}, the authors raise the question of whether the antiampleness of the kernels of the $n+1$ Cartier operator $\k_i: F_* \mathcal Z_i \to \Omega_X^i$ can characterize projective space.
In particular, they raise the question of of the ampleness of $(\B^2_X)^\vee$ for quadric threefolds and $\P^3$. 
(For the definition of the Cartier operators, see \Cref{Cartier}.)
Here, we show that $(\B^2_X)^\vee$ (and hence all $(\B^i_X)^\vee$) is ample for both of these varieties, via the following:

\begin{prop}
\label{wedgeTX}
Let $X$ be a smooth variety such that $\bigwedge^i T_X$ is ample (respectively, nef). Then $(\B^i_X)^\vee$ is ample (respectively, nef).
\end{prop}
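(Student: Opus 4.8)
The plan is to connect $(\B^i_X)^\vee$ to a positivity statement about $\bigwedge^i T_X$ by working through the short exact sequences introduced in Section~\ref{Cartier}. Recall we have the two fundamental sequences of locally free sheaves
$$
0 \to \B_X^i \to \mathcal Z_X^i \to \Omega_X^i \to 0
\quad\text{and}\quad
0\to \mathcal Z_X^i \to F_* \Omega_X^i \xra{F_* d} \B_X^{i+1} \to 0.
$$
The strategy is to dualize the first sequence to express $(\B^i_X)^\vee$ as a quotient (or kernel) of dualized Cartier-operator data, and then to identify the relevant positivity input with the ampleness of $\bigwedge^i T_X$. Since quotients of ample bundles are ample and the dual of an injection of vector bundles is a surjection, the implication should run in the direction we want: if we can exhibit $(\B^i_X)^\vee$ as receiving a surjection from something built out of $\bigwedge^i T_X = (\Omega_X^i)^\vee$, ampleness propagates down.

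First I would dualize the sequence $0 \to \B_X^i \to \mathcal Z_X^i \to \Omega_X^i \to 0$ to obtain
$$
0 \to \bigwedge\nolimits^i T_X \to (\mathcal Z_X^i)^\vee \to (\B_X^i)^\vee \to 0,
$$
using that $(\Omega_X^i)^\vee \cong \bigwedge^i T_X$ for $X$ smooth. This realizes $(\B_X^i)^\vee$ as a \emph{quotient} of $(\mathcal Z_X^i)^\vee$, which is the wrong direction; a subbundle of a bundle need not inherit ampleness, and a quotient's ampleness does not follow from the sub's. So the more promising route is to instead show $(\mathcal Z_X^i)^\vee$ is ample and combine with information making the quotient ample — but quotients of ample bundles \emph{are} ample, so what I actually need is the ampleness of $(\mathcal Z_X^i)^\vee$ itself. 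To get this, I would use \Cref{dual} to rewrite $(\mathcal Z_X^i)^\vee$, or more directly dualize the second sequence $0\to \mathcal Z_X^i \to F_* \Omega_X^i \to \B_X^{i+1} \to 0$ to obtain a surjection $(F_*\Omega_X^i)^\vee \to (\mathcal Z_X^i)^\vee$, reducing the problem to the positivity of $(F_*\Omega_X^i)^\vee$.

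The key computational step is then to relate $(F_* \Omega_X^i)^\vee$, via Grothendieck duality (\Cref{dual}), to $F_*(\bigwedge^i T_X \otimes \omega_X^{1-p})$, and to see that ampleness of $\bigwedge^i T_X$ forces this pushforward to be ample. Here the mechanism is that $\bigwedge^i T_X$ ample makes $\bigwedge^i T_X \otimes \omega_X^{1-p}$ ample (since $\omega_X^{1-p} = (\det T_X)^{p-1}$ is a positive twist when $T_X$, hence $\det T_X$, is positive — and more robustly, a Frobenius pushforward of an ample bundle is ample), and then $F_*$ of an ample bundle is ample. Chaining the surjections $(F_*\Omega_X^i)^\vee \twoheadrightarrow (\mathcal Z_X^i)^\vee \twoheadrightarrow (\B_X^i)^\vee$ yields ampleness of $(\B_X^i)^\vee$ as a quotient of an ample bundle, and the identical argument with ``ample'' replaced by ``nef'' handles the nef case since nefness likewise descends to quotients.

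The main obstacle I anticipate is bookkeeping the duality and twist correctly: one must track the $\omega_X^{1-p}$ factors through Grothendieck duality and verify that the pushforward $F_*(\bigwedge^i T_X \otimes \omega_X^{1-p})$ is genuinely ample rather than merely $\bigwedge^i T_X$ being ample ``upstairs.'' The delicate point is that $F_*$ of an ample bundle need not be ample in general, so I would want to invoke the precise fact that for finite flat (hence the Frobenius) morphisms, pushforward preserves ampleness — or, alternatively, argue that it suffices to produce a surjection onto $(\B_X^i)^\vee$ from \emph{some} ample bundle and construct that surjection using $F^*$-adjunction so that the ampleness of $\bigwedge^i T_X$ enters before pushing forward. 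Confirming that the chain of surjections assembles correctly, with the Cartier-isomorphism sequences dualizing to injections of \emph{vector bundles} (so their duals are honest surjections), is the step that needs the most care.
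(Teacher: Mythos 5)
Your reduction is exactly the one in the paper: you dualize the two sequences $0 \to \B_X^i \to \mathcal Z_X^i \to \Omega_X^i \to 0$ and $0 \to \mathcal Z_X^i \to F_*\Omega_X^i \to \B_X^{i+1} \to 0$ to get surjections $(F_*\Omega_X^i)^\vee \twoheadrightarrow (\mathcal Z_X^i)^\vee \twoheadrightarrow (\B_X^i)^\vee$, so that everything comes down to showing that $(F_*\Omega_X^i)^\vee$ is ample (respectively, nef). Up to that point your argument is correct and agrees with the paper's.

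The gap is in how you establish that last ampleness. Rewriting $(F_*\Omega_X^i)^\vee \cong F_*\bigl(\bigwedge^i T_X \otimes \omega_X^{1-p}\bigr)$ via Grothendieck duality is fine (this is \Cref{dual}), and $\bigwedge^i T_X \otimes \omega_X^{1-p}$ is indeed ample (ampleness of $\bigwedge^i T_X$ forces $\det T_X$, hence $\omega_X^{1-p}$, to be ample). But you then conclude by invoking the ``fact'' that pushforward along finite flat morphisms (in particular Frobenius) preserves ampleness. No such fact exists: it is false, and the paper itself contains the counterexamples. By \Cref{pushample}, if $F_*L$ is ample then $L^\vee \otimes \omega_X^{1-p}$ is not effective; on $\P^n$ this means $F_*\O_{\P^n}(d)$ is not ample for any $d \le (n+1)(p-1)$, so $F_*$ of an ample line bundle already fails to be ample. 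Worse, this failure occurs precisely when $\omega_X^{1-p}$ has sections, i.e., on the Fano-type varieties at which this proposition is aimed, so there is no regime of interest where your principle could be salvaged; and your fallback suggestion (build the surjection ``before pushing forward'' via $F^*$-adjunction) is not carried out and does not obviously lead anywhere. What the paper actually invokes is \cite[Theorem~1.1]{antiample}: if $E^\vee$ is ample and $f\colon X \to Y$ is a finite surjection, then $(f_*E)^\vee$ is ample (with a nef analogue). That is, finite pushforward preserves ampleness of the \emph{dual}, not ampleness itself; applied to $E = \Omega_X^i$ and $f = F$, it gives exactly the ampleness of $(F_*\Omega_X^i)^\vee$ you need. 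This asymmetry between ampleness and antiampleness under $f_*$ is the real content of the step, and it is a genuine theorem that your proposed general principle does not replace.
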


\begin{proof}
\label{proofWTX}
We have short exact sequences of vector bundles
$$
0 \to \B^i_X \to  \mathcal Z_X^i \xrightarrow{\k_i} \Omega_X^i \to 0
$$
and 
$$
0 \to \mathcal Z_X^i \to F_* \Omega_X^i \xrightarrow{F_*d^i} \B^{i+1}_X \to 0.
$$
Dualizing, we obtain surjections 
$(\mathcal Z^i_X)^\vee \twoheadrightarrow (\B^i_X)^\vee$
and 
$(F_*\Omega_X^i)^\vee \twoheadrightarrow (\mathcal Z^i_X)^\vee$,
and thus a surjection
$$
(F_* \Omega_X^i)^\vee \twoheadrightarrow (\B^i_X)^\vee.
$$
It thus suffices to show that $(F_* \Omega_X^i)^\vee$ is ample. This follows from the assumption that $\bigwedge^i T_X = (\Omega_X^i)^\vee$ is ample: \cite[Theorem~1.1]{antiample} states that if $E^\vee$ is ample and $f:X\to Y$ a finite surjection, then $(f_* E)^\vee$ is ample as well; though the proof there is written with a characteristic-$0$ assumption, it carries over verbatim in characteristic~$p$.
\end{proof}

The following gives a large class of examples where $\bigwedge^i T_X$ is ample for $i$ large compared to the index $a$ of a Fano variety $X$ in $\P^n$:

\begin{prop}
Let $X\subset \P^n$ be a smooth subvariety such that $\w_X = \O_X(-a)$ for $a>0$.
Then $\bigwedge^i T_X$ is ample for all $\dim X-a+2\leq i \leq \dim X$, and nef for all $\dim X - a +1 \leq i \leq \dim X$.
\end{prop}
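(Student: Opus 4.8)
The plan is to reduce the entire statement to the global generation of the twisted cotangent exterior powers on projective space. First I would use the perfect pairing $\bigwedge^i T_X \cong \bigwedge^{\dim X - i}\Omega_X \otimes \det T_X$. Since $\det T_X = \omega_X^\vee = \O_X(a)$, setting $j := \dim X - i$ turns the problem into showing that $\bigwedge^j \Omega_X \otimes \O_X(a)$ is ample for $0 \le j \le a-2$ and nef for $0 \le j \le a-1$; translating back through $j = \dim X - i$ then yields exactly the two stated ranges $\dim X - a + 2 \le i \le \dim X$ and $\dim X - a + 1 \le i \le \dim X$. Note that the extreme case $i = \dim X$, i.e. $j = 0$, is simply the ampleness of $\O_X(a)$.

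The key positivity input is that $\bigwedge^j \Omega_X(j+1)$ is globally generated, hence nef. To establish this I would apply $\bigwedge^j$ to the conormal sequence $0 \to N_{X/\P^n}^\vee \to \Omega_{\P^n}\res X \to \Omega_X \to 0$, which produces a surjection $\bigwedge^j(\Omega_{\P^n}\res X) \twoheadrightarrow \bigwedge^j \Omega_X$; after twisting by $\O_X(j+1)$ it therefore suffices to know that $\bigl(\Omega_{\P^n}^j(j+1)\bigr)\res X$ is globally generated. This reduces to the classical fact on $\P^n$: writing $Q = T_{\P^n}(-1)$ for the globally generated quotient in the Euler sequence, one computes $\Omega_{\P^n}^j(j+1) \cong \bigwedge^{n-j} Q$, an exterior power of a globally generated bundle and hence globally generated. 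Restriction to $X$ preserves global generation, and quotients of globally generated bundles are globally generated, so $\bigwedge^j\Omega_X(j+1)$ is nef.

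Finally I would split off the remaining twist by writing $\bigwedge^j \Omega_X \otimes \O_X(a) = \bigl(\bigwedge^j\Omega_X(j+1)\bigr) \otimes \O_X(a-j-1)$. When $j \le a-1$ the line bundle $\O_X(a-j-1)$ is nef (it is a nonnegative power of $\O_X(1)$, trivial when $j = a-1$), so the product of the two nef factors is nef; when $j \le a-2$ the line bundle $\O_X(a-j-1)$ is ample, and the tensor product of a nef bundle with an ample line bundle is ample. The only step demanding genuine care is this last positivity multiplication rule, which I would invoke as a standard fact (see, e.g., Lazarsfeld's \emph{Positivity in Algebraic Geometry}), together with the index bookkeeping needed to confirm that the boundaries $j = a-1$ and $j = a-2$ land precisely on $i = \dim X - a + 1$ and $i = \dim X - a + 2$; everything else is an assembly of standard facts about conormal sequences, exterior powers, and global generation.
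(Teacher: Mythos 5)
Your proof is correct, and it shares the paper's overall skeleton (the duality isomorphism $\bigwedge^i T_X \cong \bigwedge^{\dim X - i}\Omega_X \otimes \omega_X^{-1}$ followed by the conormal surjection from $\P^n$), but you establish the key positivity fact by a genuinely different and more elementary route. The paper proves global generation of $\bigwedge^k\Omega_{\P^n}(k+1)$ via Castelnuovo--Mumford regularity, using the Bott formula to check the required cohomology vanishing; you instead identify $\Omega^j_{\P^n}(j+1) \cong \bigwedge^{n-j}\bigl(T_{\P^n}(-1)\bigr)$ through the Euler sequence, so that global generation is immediate (an exterior power of a quotient of a trivial bundle), with no cohomology computation at all. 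Your identification is right: with $Q = T_{\P^n}(-1)$ one has $\det Q = \mathcal{O}(1)$ and $Q^\vee = \Omega_{\P^n}(1)$, so the pairing $\bigwedge^{n-j}Q \cong \bigwedge^j Q^\vee \otimes \det Q = \Omega^j_{\P^n}(j+1)$ holds. The two proofs also diverge in how nefness is upgraded to ampleness: the paper twists the surjection $\mathcal{O}_{\P^n}^{\oplus N} \twoheadrightarrow \bigwedge^k\Omega_{\P^n}(k+1)$ by $\mathcal{O}(1)$ to exhibit $\bigwedge^k\Omega_{\P^n}(k+2)$ as a quotient of an ample bundle, while you tensor the nef bundle $\bigwedge^j\Omega_X(j+1)$ on $X$ with the ample line bundle $\mathcal{O}_X(a-j-1)$ and invoke the standard fact that a nef bundle tensored with an ample line bundle is ample (Lazarsfeld, Positivity II, Corollary 6.2.13, valid in characteristic $p$ via Nakai--Moishezon). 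In fact, since you have genuine global generation rather than mere nefness, you could sidestep that citation entirely by the paper's quotient trick: $\bigwedge^j\Omega_X(j+1) \otimes \mathcal{O}_X(a-j-1)$ is a quotient of $\mathcal{O}_X(a-j-1)^{\oplus N}$, which is ample when $j \leq a-2$. Your index bookkeeping ($j = a-1$ and $j = a-2$ matching $i = \dim X - a + 1$ and $i = \dim X - a + 2$) checks out.
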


For  $X$ a degree-$d$ hypersurface in $\P^n$, we have $a=n+1-d $ and thus:

\begin{cor}
\label{TXhyper}
Let $X$ be a smooth degree-$d$ hypersurface in $\P^n$. Then $\bigwedge^i T_X$ is ample for all $d\leq i \leq n-1$ and nef for all $d-1 \leq i \leq n-1$.
In particular, if $Q$ is a smooth quadric hypersurface, then $\bigwedge^i T_Q$ is ample for all $2\leq i \leq n-1$.
\end{cor}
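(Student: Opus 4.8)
My plan is to dualize and reduce the whole statement to a positivity computation for $\Omega^j_{\P^n}$ on $\P^n$ itself. Writing $m=\dim X$, I would first use the perfect pairing $\bigwedge^i T_X\cong\bigwedge^{m-i}\Omega_X\otimes\det T_X$ together with $\det T_X=\omega_X^{-1}=\O_X(a)$ to rewrite
$$
\bigwedge^i T_X\cong\bigwedge^{m-i}\Omega_X\otimes\O_X(a).
$$
Setting $j=m-i$, the ample range $m-a+2\le i\le m$ becomes $0\le j\le a-2$ and the nef range becomes $0\le j\le a-1$, so it suffices to show that $\bigwedge^j\Omega_X\otimes\O_X(a)$ is ample for $j\le a-2$ and nef for $j\le a-1$.

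Next I would pass to $\P^n$ via the conormal sequence $0\to N^\vee_{X/\P^n}\to\Omega_{\P^n}\res X\to\Omega_X\to 0$, whose $j$-th exterior power gives a surjection $\bigwedge^j(\Omega_{\P^n}\res X)\twoheadrightarrow\bigwedge^j\Omega_X$; twisting by $\O_X(a)$ produces a surjection $\Omega^j_{\P^n}(a)\res X\twoheadrightarrow\bigwedge^j\Omega_X\otimes\O_X(a)$. Since quotients and restrictions of ample (resp.\ nef) bundles remain ample (resp.\ nef), the problem reduces to proving that $\Omega^j_{\P^n}(a)$ is ample on $\P^n$ for $a\ge j+2$ and nef for $a\ge j+1$.

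The key input I would establish is that $\Omega^j_{\P^n}(j+1)$ is globally generated. I would get this from the exterior powers of the dual Euler sequence $0\to\Omega^1_{\P^n}\to\O(-1)^{\oplus(n+1)}\to\O\to 0$: because the quotient has rank one, the exterior-power filtration collapses to
$$
0\to\Omega^{j+1}_{\P^n}\to\bigwedge^{j+1}\!\big(\O(-1)^{\oplus(n+1)}\big)\to\Omega^j_{\P^n}\to 0,
$$
and twisting by $\O(j+1)$ exhibits $\Omega^j_{\P^n}(j+1)$ as a quotient of $\O^{\oplus\binom{n+1}{j+1}}$, hence globally generated and so nef. Tensoring with the ample $\O(k-j-1)$ then gives that $\Omega^j_{\P^n}(k)$ is ample for $k\ge j+2$ and nef for $k\ge j+1$. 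Taking $k=a$ and using $a\ge j+2$ (resp.\ $a\ge j+1$) on our ranges finishes the proposition, and \Cref{TXhyper} is the special case $\dim X=n-1$, $a=n+1-d$, where $m-a+2=d$ and $m-a+1=d-1$.

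I do not expect a genuine obstacle here: once the global generation of $\Omega^j_{\P^n}(j+1)$ is in hand the argument is a formal assembly of standard positivity facts. The one point I would check carefully is that every ingredient is characteristic-free --- the exterior-power filtration of a short exact sequence of vector bundles is local and hence valid over any field, as are global generation, the stability of nefness and ampleness under quotients and restrictions, and the implication ``nef $\otimes$ ample $=$ ample.'' As a sanity check on the numerology, the line restriction $\Omega^j_{\P^n}\res L=\O(-j-1)^{\binom{n-1}{j-1}}\oplus\O(-j)^{\binom{n-1}{j}}$ shows that at the level of $\P^n$ the bounds $a\ge j+2$ and $a\ge j+1$ are exactly the ones forced, which is the source of the stated ranges.
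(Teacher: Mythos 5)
Your proof is correct, and its skeleton is the same as the paper's: the same perfect-pairing dualization $\bigwedge^i T_X\cong\bigwedge^{\dim X-i}\Omega_X\otimes\omega_X^{-1}$, the same reduction to $\P^n$ via the surjection $\bigwedge^j\Omega_{\P^n}|_X\to\bigwedge^j\Omega_X$, and the same endgame (global generation of $\Omega^j_{\P^n}(j+1)$ gives nefness, then twisting by $\O(1)$ gives ampleness for twists $\geq j+2$), with the corollary obtained from the proposition by setting $a=n+1-d$. The one genuine divergence is how you prove the key fact that $\Omega^j_{\P^n}(j+1)$ is globally generated: the paper deduces it from Castelnuovo--Mumford regularity, using the Bott formula to check that $\bigwedge^j\Omega_{\P^n}$ is $(j+1)$-regular, whereas you use the collapsed exterior-power filtration of the dual Euler sequence, $0\to\Omega^{j+1}_{\P^n}\to\bigwedge^{j+1}\bigl(\O(-1)^{\oplus(n+1)}\bigr)\to\Omega^j_{\P^n}\to 0$ (valid because the quotient in the Euler sequence has rank one), which after twisting by $\O(j+1)$ exhibits $\Omega^j_{\P^n}(j+1)$ as a quotient of a trivial bundle. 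Your route is more elementary and self-contained --- no regularity theory or cohomology computation is needed, the generating sections are produced explicitly, and the argument is manifestly characteristic-free, while the paper's route implicitly relies on the Bott formula holding in characteristic $p$ (which it does, but is an extra point to certify). Your sanity check by restricting to a line, showing the bounds $k\geq j+2$ (ample) and $k\geq j+1$ (nef) are exactly sharp on $\P^n$, is a nice addition the paper does not record. All remaining ingredients you invoke (right-exactness of exterior powers applied to the conormal surjection, stability of ampleness and nefness under quotients and restrictions, and globally generated $\otimes$ ample being ample) are standard in every characteristic, as you note.
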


The proof of the proposition is a straightforward application of Castelnuovo--Mumford regularity and the cotangent bundle sequence:

\begin{proof}
Since $\bigwedge^i T_X = (\bigwedge^i \Omega_X )^\vee$
the perfect pairing 
$
\bigwedge^i \Omega_X \otimes \bigwedge^{\dim X-i} \Omega_X \to \w_X
$
implies that $$\bigwedge^i T_X \cong \bigwedge^{\dim X - i} \Omega_X \otimes \w_X\inv
=\Bigl(\bigwedge^{\dim X - i} \Omega_X\Bigr) (a).
$$

Now, we claim that $\bigwedge^k \Omega_X (j)$ is ample for $j\geq k +2$ and nef for $j\geq k+1$.
To see this, note that there is a surjection
$$
\bigwedge^k \Omega_{\P^n}(j)\res X  \to \bigwedge^k \Omega_X (j) \to 0,
$$
Thus, it suffices to prove that $\bigwedge^k \Omega_{\P^n}(j)$ is ample for $j \geq k+2$ and nef for $j\geq k+1$.
This would complete the proof of the proposition, since taking $k = \dim X -i$ we get that $\bigwedge^i T_X = \bigwedge^{\dim X- i}\Omega_X(a)$ is ample for $i \geq \dim X-a+2$ and nef for all $i\geq \dim X - a +1$.

In fact, we will show that
$\bigwedge^k \Omega_{\P^n}(k+1)$ is globally generated, thus nef; then
 twisting the surjection
$$
\O_{\P^n}^{\oplus N} \to \bigwedge^k \Omega_{\P^n}(k+1) \to 0
$$
by $\O_{\P^n(1)}$ we have that $\bigwedge^k \Omega_{\P^n}(k+2)$ is a quotient of an ample vector bundle and hence ample.

So, we need to show that $\bigwedge^k \Omega_{\P^n}(k+1)$ is globally generated; by the theory of Castelnuovo-Mumford regularity, it suffices to show that $\bigwedge^k \Omega_{\P^n}$ is $k+1$-regular, i.e., that
$H^i(\bigwedge^k \Omega_{\P^n}(k+1-i)) = 0$ for all $i>0$.
However, by the Bott formula \cite[Proposition~14.4]{Bott}
we have
for $i<n$ that
$H^i( \bigwedge^k \Omega_{\P^n}(j)) $ is nonzero only for $j=0$ and $i=k$, 
while for $i=n$ it is nonzero only for $j<k-n$.
Thus, it is clear that $H^i(\bigwedge^k \Omega_{\P^n}(k+1-i)) = 0$ for all $i>0$, and the result follows.
\end{proof}

Combining this with the well-known fact that $T_{\P^n}$, and hence all its exterior powers, are ample, we have the following:

\begin{cor}
If $X$ is $\P^n$ or a smooth quadric hypersurface, then $(\B^i_X)^\vee$ is ample for all $i\geq 2$.
\end{cor}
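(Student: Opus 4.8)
The plan is to reduce the statement entirely to the two preceding results, \Cref{wedgeTX} and \Cref{TXhyper}, together with the ampleness of the exterior powers of $T_{\P^n}$; essentially no new work is required beyond keeping track of the range of~$i$.

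First I would treat $X=\P^n$. The Euler sequence exhibits $T_{\P^n}$ as a quotient of $\O_{\P^n}(1)^{\oplus(n+1)}$, so $T_{\P^n}$ is ample, and since every exterior power of an ample bundle is ample (each $\bigwedge^i E$ being a quotient of the tensor power $E^{\otimes i}$), the bundle $\bigwedge^i T_{\P^n}$ is ample for all $1\le i\le n$. Applying \Cref{wedgeTX} then yields that $(\B^i_{\P^n})^\vee$ is ample for $1\le i\le n$, and in particular for all $i\ge 2$ in the meaningful range. For $i>n$ one has $\Omega_X^i=0$, hence $\B^i_X=0$, so there is nothing to prove.

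For a smooth quadric hypersurface $Q\subset\P^n$ (so $\dim Q=n-1$), \Cref{TXhyper} with $d=2$ gives directly that $\bigwedge^i T_Q$ is ample for all $2\le i\le n-1=\dim Q$. Applying \Cref{wedgeTX} once more converts this into ampleness of $(\B^i_Q)^\vee$ over the same range, and for $i>\dim Q$ the bundle $\B^i_Q$ again vanishes. It is worth observing here why the statement is restricted to $i\ge 2$ in the quadric case: the excluded value $i=1$ corresponds to $\B^1_Q=\B_Q$, whose dual is ample only in characteristic $p\neq 2$, consistent with the known behavior recorded earlier.

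The only points requiring care, rather than any genuine obstacle, are the boundary cases: one must check that the stated ranges for ampleness of $\bigwedge^i T_X$ exactly cover $2\le i\le\dim X$ in both cases, and that the top exterior power $\bigwedge^{\dim X}T_X=\omega_X^{-1}$ is indeed ample (it is $\O(n+1)$ for $\P^n$ and $\O_Q(n-1)$ for the quadric). I do not expect the argument to contain any hard step; its entire content is the combination of \Cref{wedgeTX} and \Cref{TXhyper} with the well-known ampleness of the exterior powers of $T_{\P^n}$.
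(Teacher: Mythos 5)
Your proposal is correct and follows essentially the same route as the paper: combine \Cref{wedgeTX} with the ampleness of all exterior powers of $T_{\P^n}$ for projective space, and with \Cref{TXhyper} (taking $d=2$) for the quadric, noting $\B^i_X=0$ beyond $\dim X$. The only point worth flagging is that your justification that exterior powers of ample bundles are ample (as quotients of $E^{\otimes i}$) implicitly uses that tensor products of ample bundles remain ample in characteristic $p$ (Barton's theorem), which is exactly the ``well-known fact'' the paper invokes without proof.
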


Note also that $(\B^1_X)^\vee$ is ample if $X$ is a smooth quadric hypersurface of dimension $\geq 3$ and $p>2$, by \cite[Corollary~4.8]{CRP}; thus, the antiampleness of all kernels of the Cartier operators cannot distinguish between $\P^n$ and a smooth quadric.

\section{Ampleness of $\B_X^\vee$}

In this section we consider instead the ampleness of the Frobenius cokernel. We show that it is ample if and only if the cotangent bundle is ample.

\begin{rem}
\label{amplepush}
Note that $F_* \O_X$ is never ample if $\dim X > 0$: If it were, then so would be the pullback $F^* F_* \O_X$. However, the canonical map $F^* F_* \O_X\to \O_X$ can easily be verified to be a surjection of vector bundles, and hence $\O_X$ would be ample.
\end{rem}

\begin{thm}
\label{ample}
Let $X$ be a smooth projective variety over an $F$-finite field of characteristic $p>0$. Then $\B_X$ is ample (respectively, nef) if and only if $\Omega_X$ is ample (respectively, nef).
\end{thm}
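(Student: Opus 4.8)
The plan is to build a single canonical filtration of $\B_X$ whose successive quotients are the truncated symmetric powers of $\Omega_X$, and then to read off both implications from it together. Concretely, I claim that (with $n=\dim X$) the bundle $\B_X=F_*\O_X/\O_X$ carries a filtration
\[
0=\mathcal F^{n(p-1)+1}\subset \mathcal F^{n(p-1)}\subset\cdots\subset\mathcal F^2\subset\mathcal F^1=\B_X,\qquad \mathcal F^j/\mathcal F^{j+1}\cong T^j(\Omega_X).
\]
This is the exact analogue of \Cref{filt}: working \'etale-locally with $X=\Spec k[x_1,\dots,x_n]$, the sheaf $\B_X$ is free on the monomials $x^a$ with $0\le a_i<p$ and $a\neq 0$, and filtering by total degree $\sum a_i\ge j$ identifies $\mathcal F^j/\mathcal F^{j+1}$ with the span of the degree-$j$ monomials, which is canonically the $j$-th truncated symmetric power of $\Omega_X$ exactly as in \Cref{remtrunc} (here $\Omega_X$ plays the role that the conormal bundle $I/I^2$ played there). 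In particular the bottom quotient $\mathcal F^1/\mathcal F^2$ is $\Omega_X$ itself, while the top term $\mathcal F^{n(p-1)}\cong T^{n(p-1)}(\Omega_X)\cong\omega_X^{p-1}$ by \Cref{global}.

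Granting the filtration, both implications are formal. For the direction ``$\B_X$ ample (resp.\ nef) $\Rightarrow\Omega_X$ ample (resp.\ nef)'': $\Omega_X=\mathcal F^1/\mathcal F^2$ is a quotient bundle of $\B_X$, and quotients of ample (resp.\ nef) bundles are ample (resp.\ nef). For the converse: if $\Omega_X$ is ample (resp.\ nef), then each $\Sym^j\Omega_X$ is ample (resp.\ nef), and by \Cref{remtrunc} the piece $T^j(\Omega_X)$ is a quotient of $\Sym^j\Omega_X$, hence ample (resp.\ nef) for every $1\le j\le n(p-1)$. Since an extension of ample (resp.\ nef) bundles is again ample (resp.\ nef), an induction down the filtration, starting from $\mathcal F^{n(p-1)}=T^{n(p-1)}(\Omega_X)$ and using $0\to\mathcal F^{j+1}\to\mathcal F^j\to T^j(\Omega_X)\to 0$, shows $\B_X=\mathcal F^1$ is ample (resp.\ nef).

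The positivity inputs here are all standard closure properties (quotients, symmetric powers, and extensions of ample or nef bundles remain ample or nef), so the only genuine work is \emph{constructing} the filtration and verifying that its graded pieces are the truncated symmetric powers $T^j(\Omega_X)$ canonically and globally. I expect this to be the main obstacle, though it is a direct transcription of the computation already carried out for a subvariety in \Cref{filt}, with the role of $I/I^2$ now played by $\Omega_X$. The subtlety is purely in checking that the total-degree filtration is coordinate-independent (so that it sheafifies), rather than in any new geometric idea; once that is in place the nef statement drops out of the same induction with no extra effort, matching the paper's stated intention to treat nefness by the identical technique.
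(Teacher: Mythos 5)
Your argument has a genuine gap, and it sits precisely at the step you flagged as ``the only genuine work'': the canonical filtration you posit on $\B_X$ does not exist. A degree count on $X=\P^1$ rules it out: there $F_*\O_{\P^1}\cong \O_{\P^1}\oplus\O_{\P^1}(-1)^{\oplus(p-1)}$, so $\B_{\P^1}\cong\O_{\P^1}(-1)^{\oplus(p-1)}$ has degree $-(p-1)$, while your proposed graded pieces $T^j(\Omega_{\P^1})=\O_{\P^1}(-2j)$ for $1\le j\le p-1$ have total degree $-p(p-1)$; ranks match (both are $p-1$), but degrees add along filtrations, so no such filtration exists, for any $p$. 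The conceptual obstruction: a filtration of $\B_X$ with graded pieces $T^j(\Omega_X)$ would force $c_1(\B_X)=\sum_j c_1(T^j(\Omega_X))=c_1(F^*\B_X)=p\,c_1(\B_X)$, i.e.\ $(p-1)\,c_1(\B_X)=0$, which already fails for $\P^1$. Correspondingly, your local total-degree filtration genuinely is coordinate-dependent, and the analogy with \Cref{filt} breaks down: there the filtration is intrinsic, namely multiplication by powers of the ideal $I$ on $G_Z=I/I^{[p]}$, and only incidentally agrees with total degree in adapted coordinates. On $F_*\O_X$ the $\O_X$-action is through $p$-th powers, so multiplication by functions shifts monomial degrees in steps of $p$, not $1$, and there is no intrinsic operation cutting out the degree-$\ge j$ part. (Explicitly, with $n=1$, $p=3$, the translation $x\mapsto x+c$ sends $x^2$ to $x^2+2cx+c^2$, whose class in $\B_X$ does not lie in the degree-$\ge 2$ piece $x^2k[x^3]+k[x^3]$ computed in the $x$-coordinate.)

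The repair is small but essential, and it is exactly the paper's proof: pull back by Frobenius first. Since $F$ is finite and surjective, $\B_X$ is ample (resp.\ nef) if and only if $F^*\B_X$ is; moreover $F^*\B_X\cong\ker(F^*F_*\O_X\to\O_X)$, because the counit splits off the trivial summand as in \Cref{amplepush}. The bundle $F^*F_*\O_X$ \emph{does} carry a canonical filtration with graded pieces $T^j(\Omega_X)$, $0\le j\le n(p-1)$ (this is the result of Katz and Sun cited in the paper); equivalently, one can identify $F^*F_*\O_X$ with the pushdown to $X$ of $\O_{X\times X}/I_\Delta^{[p]}$ and apply \Cref{filt} to the diagonal $\Delta\subset X\times X$, whose conormal bundle is $\Omega_X$. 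Note that your degree count is then consistent, since $c_1(F^*\B_X)=p\,c_1(\B_X)$. Once $\B_X$ is replaced by $F^*\B_X$, both of your implications go through exactly as you wrote them: $\Omega_X$ is the bottom graded quotient, and the downward induction using that $T^j(\Omega_X)$ is a quotient of $\Sym^j\Omega_X$ and that extensions of ample (resp.\ nef) bundles are ample (resp.\ nef) gives the converse, for ampleness and nefness alike.
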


\begin{proof}
Since $F$ is finite, ampleness of $\B_X$ is equivalent to ampleness of $F^* \B_X$, and likewise for nefness; since pullback is right exact, $F^* \B_X=  \coker(\O_X \to F^* F_* \O_X )$.
Since $X$ is smooth,
by \cite{Katz,Sun}, there is a filtration on $F^* F_* \O_X$
$$
0= V_{n(p-1)+1} \subset V_{n(p-1)} \subset \cdots \subset V_1 \subset V_0 = F^* F_* \O_X
$$ 
such that
\begin{itemize}
\item $V_1 = \ker(F^* F_* \O_X \to \O_X)$. 
\item 
 $V_i/V_{i+1} \cong  T^i \Omega_X^1$ for all $i$.
\end{itemize}
By \cref{amplepush}, $F^* F_* \O_X = \O_X\oplus F^* \B_X$, with $F^* \B_X$ exactly the kernel of $F^* F_* \O_X\to \O_X$. Thus, $F^* \B_X = V_1$ has a filtration with graded pieces $T^i \Omega_X^1$ for $1\leq i \leq n(p-1)$.

Now, if  $\B_X$, and hence $F^* \B_X$, are ample (or nef), then the first short exact sequence of the filtration is 
$$
0 \to V_2 \to F^* \B_X \to T^1 \Omega_X =\Omega_X \to 0,
$$
and thus $\Omega_X$ is ample (or nef).

If $\Omega_X$ is ample (or nef), then we claim all the $T^i \Omega_X$ are ample (or nef). Once shown, the ampleness or nefness of $F^* \B_X$ is immediate: The last short exact sequence of the filtration is
$$
0 \to V_{n(p-1)} = T^{n(p-1)} \Omega_X^1 \to V_{n(p-1)-1}  \to T^{n(p-1)-1} \Omega_X \to 0, 
$$
and thus $V_{n(p-1)}$ is an extension of ample (or nef) bundles and hence ample (or nef) by \cite[Corollary~3.4]{HartshorneVB}. Continuing inductively, we see that all the $V_i$ are ample (or nef), and in particular $V_1=F^* \B_X$ is ample (or nef), and thus $\B_X$ is ample (or nef).
\end{proof}

\begin{rem}
The condition of $\Omega_X$ being ample is quite strong. For example, it is inherited by all subvarieties, and thus such an $X$ can contain no rational or elliptic curves.
There are several known classes of varieties with ample cotangent bundle, including:
\begin{enumerate}
\item Curves of genus $g\geq 2$.
\item Complete intersections of at least $n/2$ sufficiently ample general hypersurfaces in an abelian variety of dimension $n$ \cite{Debarre},  or likewise in $\P^n$ \cite{BD,Xie,CIRE}.
\item A high-codimension general linear section of a product of varieties with big cotangent bundle (\cite{Debarre}, originally due to Bogomolov).
\end{enumerate}
With that said, to our knowledge there is no prospective classification for such varieties, and thus in characteristic $p$ no classification of varieties with ample Frobenius cokernel.
\end{rem}

\section{Connection to Frobenius differential operators}
\label{diffops}

In the final section, we make a comment on the relation between positivity of $\B_X^\vee$ and positivity of sheaves of differential operators in characteristic $p$.

We briefly recall the notion of differential operators:

\begin{dfn}
Let $k$ be a field,
let $R$ be a $k$-algebra. We define $R$-modules $D^m_{R/k}\subset \End_k(R)$, the $k$-linear differential operators of order $m$ inductively as follows:
\begin{itemize}
\item $D^0_{R/k} =\Hom_R(R,R)\cong R$, thought of as multiplication by $R$. 
\item  $\delta \in \End_k(R)$ is in $D^m_{R/k}$ if $[\delta, r] \in D^{m-1}_{R/k}$ for any $r\in D^0_{R/k}$.
\end{itemize}
We write $D_{R/k}=\bigcup D^m_{R/k}$. $D_{R/k}$ is a noncommutative ring, and $R$ is a left $D_{R/k}$-module.
\end{dfn}

There is another natural filtration of $D_R$ in characteristic $p$:

\begin{prop}[{{\cite[1.4.9]{Yekutieli}}}]
\label{Yek}
Let $k$ be a perfect field of characteristic $p>0$, and 
and  let $R$ be an $F$-finite $k$-algebra. Then 
$$
D_{R/k} = \bigcup_{e} \Hom_{R^{p^e}}(R,R).
$$
We write $D_R^{(e)} := \Hom_{R^{p^e}}(R,R)$, and refer to them as differential operators of level $e$. 
\end{prop}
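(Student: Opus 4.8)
The plan is to reformulate $D_{R/k}$ via principal parts and then read off both inclusions from the behaviour of the Frobenius on the diagonal ideal. I would start from the standard reformulation of the inductive commutator definition: writing $I=\ker(R\otimes_k R\xrightarrow{\mathrm{mult}}R)$, a $k$-linear map $\delta\colon R\to R$ lies in $D^m_{R/k}$ if and only if the associated map $\tilde\delta\colon R\otimes_k R\to R$, $a\otimes b\mapsto a\,\delta(b)$, vanishes on $I^{m+1}$; that is, $D^m_{R/k}=\Hom_R(P^m_{R/k},R)$ for the module of principal parts $P^m_{R/k}=(R\otimes_k R)/I^{m+1}$. On the other hand, since $k$ is perfect we have $k=k^{p^e}\subseteq R^{p^e}$, so $R^{p^e}$ is a $k$-subalgebra of $R$, and $\delta$ is $R^{p^e}$-linear precisely when $\tilde\delta$ descends along the surjection $\pi\colon R\otimes_k R\to R\otimes_{R^{p^e}}R$, i.e.\ kills the kernel $K=\ker\pi$. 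The single fact driving everything is the characteristic-$p$ identity
$$
(r\otimes 1-1\otimes r)^{p^e}=r^{p^e}\otimes 1-1\otimes r^{p^e},
$$
valid because $r\otimes 1$ and $1\otimes r$ commute; its left-hand side lies in $I^{p^e}$, while the right-hand side is a typical generator of $K$.

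For $D_{R/k}\subseteq\bigcup_e\Hom_{R^{p^e}}(R,R)$, I would take $\delta\in D^m_{R/k}$ and choose $e$ with $p^e\geq m+1$. Then $\tilde\delta$ vanishes on $I^{m+1}\supseteq I^{p^e}$, hence on every element $r^{p^e}\otimes 1-1\otimes r^{p^e}=(r\otimes1-1\otimes r)^{p^e}$. Since these elements generate $K$, the map $\tilde\delta$ descends along $\pi$, which is exactly the statement that $\delta$ is $R^{p^e}$-linear.

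For the reverse inclusion $\bigcup_e\Hom_{R^{p^e}}(R,R)\subseteq D_{R/k}$, I would let $\delta$ be $R^{p^e}$-linear, so that $\tilde\delta=\bar\delta\circ\pi$ for some $\bar\delta\colon R\otimes_{R^{p^e}}R\to R$. Here $F$-finiteness enters: because $k$ is perfect and $R$ is $F$-finite, $R$ is a finitely generated $R^{p^e}$-algebra, say $R=R^{p^e}[x_1,\dots,x_d]$, so the augmentation ideal $J=\ker(R\otimes_{R^{p^e}}R\to R)$ is generated by the $d$ pairwise-commuting elements $1\otimes x_i-x_i\otimes1$, each satisfying $(1\otimes x_i-x_i\otimes1)^{p^e}=1\otimes x_i^{p^e}-x_i^{p^e}\otimes1=0$ (as $x_i^{p^e}\in R^{p^e}$). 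Hence $J^{\,d(p^e-1)+1}=0$. Since $\pi$ is a surjective ring map with $\pi(I)=J$, I get $\tilde\delta(I^{\,d(p^e-1)+1})=\bar\delta(J^{\,d(p^e-1)+1})=0$, so $\delta\in D^{\,d(p^e-1)}_{R/k}$.

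I expect the main obstacle to be this last inclusion, and specifically the nilpotence bound $J^{\,d(p^e-1)+1}=0$: this is the only place that uses $F$-finiteness in an essential way, and it is what guarantees that a level-$e$ operator has \emph{finite} differential order rather than merely being an increasing limit of such. By contrast, the reverse inclusion and the Frobenius identity are formal once the principal-parts bookkeeping is in place, and perfectness of $k$ is needed only to place both sides inside the common ambient ring $\End_k(R)$ by making $R^{p^e}$ a $k$-subalgebra.
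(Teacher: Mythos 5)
The paper offers no proof of this proposition at all: it is imported from the literature, with the citation to Yekutieli standing in for the argument. So there is nothing internal to compare against; what matters is whether your blind argument is sound, and it is. Both inclusions work as you describe. One phrasing in the forward inclusion is looser than what the argument needs: $\tilde\delta$ vanishing on the \emph{generators} $r^{p^e}\otimes 1-1\otimes r^{p^e}$ of $K$ does not by itself give vanishing on $K$, since $\tilde\delta$ is only left $R$-linear, not $R\otimes_k R$-linear; but the correct step is already in your setup, namely that these generators lie in the ideal $I^{p^e}\subseteq I^{m+1}$, hence $K\subseteq I^{m+1}$ as ideals, and $\tilde\delta$ kills $I^{m+1}$ by hypothesis. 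The reverse inclusion is complete as written: $\pi(I)=J$ because $\pi$ is a surjective ring map compatible with the two multiplication maps, so $\tilde\delta(I^{N})=\bar\delta(J^{N})=0$ with $N=d(p^e-1)+1$, and perfectness of $k$ gives $k=k^{p^e}\subseteq R^{p^e}$, so an $R^{p^e}$-linear map is $k$-linear and the principal-parts criterion applies; your identification of the nilpotence bound $J^{d(p^e-1)+1}=0$ as the one place $F$-finiteness is essential is exactly right. It is worth noting that the forward inclusion admits an even shorter proof directly from the paper's inductive commutator definition, with no principal parts: writing $m_r\in\End_k(R)$ for multiplication by $r$, left and right multiplication by $m_r$ on $\End_k(R)$ commute, so in characteristic $p$ one has $(\mathrm{ad}_{m_r})^{p^e}=\mathrm{ad}_{(m_r)^{p^e}}=\mathrm{ad}_{m_{r^{p^e}}}$; hence any $\delta$ of order $\leq m<p^e$ satisfies $[\delta,r^{p^e}]=(\mathrm{ad}_{m_r})^{p^e}(\delta)=0$ and is therefore $R^{p^e}$-linear. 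Your principal-parts framework buys uniformity, since the same bookkeeping handles both directions, and the reverse direction genuinely needs something of that kind.
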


All these notions globalize to any variety $X$, and so we write $D^m_X$ and $D^{(e)}_X$ for the sheaves of order-$m$ and level-$e$ differential operators on $X$, respectively.

Note that $D^m_X$ and $D^{(e)}_X$ always have a free summand $\O_X$
(the natural subsheaf $D^{(0)}_X = D^0_X = \O_X $ splits via the ``evaluation at $1$'' map $D^m_X\to \O_X$ or $D^{(e)}_X \to \O_X$).
, and so are never ample if $\dim X>0$.
Thus, it is natural to ask when the quotients (equivalently, complementary summands) $D^m_X/\O_X$ or $D^{(e)}_X/\O_X$ is ample.

Here, we make the following observations:

\begin{prop}
$D^m_X/\O_X$ is ample if and only if $X=\P^n$.
\end{prop}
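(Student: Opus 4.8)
The plan is to identify $D^m_X/\O_X$ as an iterated extension of the symmetric powers $\Sym^i T_X$, and thereby reduce the statement to Mori's characterization of projective space by ampleness of the tangent bundle.

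First I would recall the order filtration on $D^m_X$. On a smooth variety the diagonal $\Delta\colon X\hookrightarrow X\times X$ is a regular immersion with conormal sheaf $\Omega_X$, so the $m$-th sheaf of principal parts $\mathcal P^m_X$ carries a filtration (by powers of the diagonal ideal) whose successive quotients are $\Sym^i\Omega_X$ for $0\le i\le m$; crucially this uses symmetric, not divided, powers, and holds in every characteristic because the associated graded of a regular immersion is a symmetric algebra. Dualizing $D^m_X=\SHom(\mathcal P^m_X,\O_X)$ then gives a filtration of $D^m_X$ by subbundles whose quotients are $\Sym^i T_X$ for $0\le i\le m$; the bottom piece $\Sym^0 T_X=\O_X$ is exactly the canonical subsheaf $D^0_X=\O_X$, and the top quotient $D^m_X/D^{m-1}_X$ is the symbol $\Sym^m T_X$. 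Hence $D^m_X/\O_X$ is an iterated extension of the bundles $\Sym^i T_X$ for $1\le i\le m$, and $\Sym^m T_X$ is a quotient of $D^m_X/\O_X$ (using $\O_X=D^0_X\subset D^{m-1}_X$ for $m\ge1$).

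The backward implication is then immediate: if $X=\P^n$ then $T_X$ is ample, each $\Sym^i T_X$ is ample, and an iterated extension of ample bundles is ample by \cite[Corollary~3.4]{HartshorneVB}. For the forward implication, if $D^m_X/\O_X$ is ample then so is its quotient $\Sym^m T_X$. I would then deduce that $T_X$ itself is ample: the relative Veronese is a closed embedding $\P(T_X)\hookrightarrow\P(\Sym^m T_X)$ carrying $\O(1)$ to $\O_{\P(T_X)}(m)$, so ampleness of $\Sym^m T_X$ (that is, of $\O(1)$ on $\P(\Sym^m T_X)$) restricts to ampleness of $\O_{\P(T_X)}(m)$, whence $T_X$ is ample. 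Finally, by Mori's theorem a smooth projective variety with ample tangent bundle is $\P^n$ (and Mori's argument is valid over any algebraically closed field, so the characteristic-$p$ hypothesis causes no trouble), completing the proof.

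The main obstacle is really just the invocation of Mori's theorem; everything else is formal bookkeeping with the order filtration together with standard facts about ample bundles. The two points deserving explicit verification are that the order filtration has genuine symmetric-power graded pieces in characteristic $p$ (one must take care that divided powers do not intrude, but for the order filtration the regular-immersion computation of the associated graded gives honest symmetric powers) and that ampleness of $\Sym^m T_X$ descends to ampleness of $T_X$.
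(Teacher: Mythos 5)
There is a genuine gap, and it sits exactly at the point you flagged as ``deserving explicit verification''—your verification reaches the wrong conclusion. The filtration on the sheaf of principal parts $\mathcal P^m_X$ does have honest symmetric powers $\Sym^i\Omega_X$ as graded pieces in every characteristic, as you say. But $D^m_X=\SHom(\mathcal P^m_X,\O_X)$, so the graded pieces of the induced filtration on $D^m_X$ are the \emph{duals} $(\Sym^i\Omega_X)^\vee$, and in characteristic $p$ these are the divided powers of $T_X$, not $\Sym^i T_X$: the natural maps $(\Sym^i\Omega_X)^\vee\hookrightarrow T_X^{\otimes i}\twoheadrightarrow \Sym^i T_X$ (invariants into the tensor power, tensor power onto coinvariants) do not compose to an isomorphism once $i\geq p$ and $\dim X\geq 2$. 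So the divided powers \emph{do} intrude, precisely at the dualization step, and for $m\geq p$ the top quotient of $D^m_X/\O_X$ is $(\Sym^m\Omega_X)^\vee$ rather than $\Sym^m T_X$. Your forward implication then breaks: ampleness of $D^m_X/\O_X$ only gives ampleness of $(\Sym^m\Omega_X)^\vee$, and your Veronese step does not apply to it, since the relative Veronese $\P(T_X)\hookrightarrow\P(\Sym^m T_X)$ is built from the surjection $\Sym^m\pi^*T_X\to\O_{\P(T_X)}(m)$, i.e., from right-exactness of $\Sym^m$, which is exactly what fails to transfer to $(\Sym^m\Omega_X)^\vee$ without further argument. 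The backward direction has the same defect: for $X=\P^n$ the pieces you must show ample are $(\Sym^i\Omega_{\P^n})^\vee$, not $\Sym^i T_{\P^n}$ (this is true—by the Euler sequence $\Sym^i\Omega_{\P^n}$ is a subbundle of $\O_{\P^n}(-i)^{\oplus N}$ with locally free quotient, so its dual is a quotient of $\O_{\P^n}(i)^{\oplus N}$—but your justification proves ampleness of the wrong bundle). Note that for $m<p$, or for curves, your proof is correct as written, since there $(\Sym^m\Omega_X)^\vee\cong\Sym^m T_X$; the gap is for $m\geq p$, which a fixed-characteristic statement quantified over all $m$ must handle. Your invocation of Mori in characteristic $p$ is not the problem—that part is fine.

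This divided-power issue is precisely what the paper's proof is organized around. Rather than trying to deduce ampleness of $T_X$ from ampleness of $(\Sym^m\Omega_X)^\vee$ via a Veronese-type statement such as \cite[Proposition~2.4]{HartshorneVB}, the paper re-enters Mori's proof and checks that its two actual inputs follow from ampleness of $(\Sym^m\Omega_X)^\vee$: first, $-K_X$ is ample, because determinants commute with duals and $\det\bigl((\Sym^m\Omega_X)^\vee\bigr)$ is a positive multiple of $-K_X$; second, for every $f:\P^1\to X$ the pullback $f^*T_X$ is a direct sum of positive-degree line bundles, because vector bundles on $\P^1$ split and ampleness of $f^*(\Sym^m\Omega_X)^\vee$ forces every summand of $f^*\Omega_X$ to have negative degree. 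If you wanted to salvage your route instead, you would need to prove directly that ampleness of $(\Sym^m\Omega_X)^\vee$ implies ampleness of $T_X$; this can be done (divided powers of locally free sheaves commute with pullback and preserve locally split surjections, so the tautological surjection $\pi^*T_X\to\O_{\P(T_X)}(1)$ induces a surjection $\pi^*(\Sym^m\Omega_X)^\vee\to\O_{\P(T_X)}(m)$ and hence a finite ``divided Veronese'' $\P(T_X)\to\P\bigl((\Sym^m\Omega_X)^\vee\bigr)$ pulling back $\O(1)$ to $\O(m)$), but this is an additional argument that your write-up neither contains nor signals the need for.
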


\begin{proof}
By induction on $m$. For $m=1$, $D^1_X = \O_X\oplus T_X$, and by \cite{Mori} the ampleness of $T_X$ is equivalent to $X=\P^n$.
For $m>1$, there is a short exact sequence
$$
0 \to D^{m-1}_X \to D^m_X \to (\Sym^m \Omega_X)^\vee \to 0,
$$
and the inclusion is an isomomorphism of the subsheaf $\O_X$ of $D^{m-1}_X$ to the subsheaf $\O_X$ of $D^m_X$. Thus, there is an induced short exact sequence
$$
0 \to D^{m-1}_X/\O_X \to D^m_X/\O_X \to \Sym^m T_X \to 0.
$$
Thus, if $D^m_X/\O_X$ is ample, then so is $(\Sym^m \Omega_X)^\vee$.

If we could write $(\Sym^m \Omega_X)^\vee = \Sym^m T_X$, then 
ampleness of
$\Sym^m T_X$ would imply that $T_X$ is ample by \cite[Proposition~2.4]{HartshorneVB}, and so again \cite{Mori} would force $X=\P^n$. In characteristic 0, this is true.

However, in characteristic $p$ it is not true that 
$(\Sym^m \Omega_X)^\vee = \Sym^m T_X$ (the left side is the so-called ``$m$-th divided power of $T_X$''). However, as noted on \cite[p.~594]{Mori}, all that his proof requires is that:
\begin{enumerate}
\item $-K_X$ is ample.
\item For any $f:\P^1\to X$, $f^* T_X$ is a direct sum of ample line bundles.
\end{enumerate}
These both follow from ampleness of $(\Sym^m \Omega_X)^\vee$:
\begin{enumerate}
\item 
The determinant of an ample bundle is ample, and 
since taking determinants commutes with taking duals, the determinant of $(\Sym^m \Omega_X)^\vee$ is the determinant of $\Sym^m T_X$, which is a positive multiple of $-K_X$, and thus $-K_X$ is ample.
\item 
Let $f:\P^1\to X$ be any morphism. Then $f^* (\Sym^m \Omega_X)^\vee$ is ample, and thus the direct sum of positive-degree line bundles. Thus $f^* \Sym^m \Omega_X$ is a direct sum of negative-degree line bundles, and so $f^* \Omega_X$ must be a direct sum of negative-degree line bundles. Thus, $f^* T_X$ is a direct sum of positive-degree line bundles.
\end{enumerate}
Thus, Mori's proof goes through, and $X=\P^n$.
\end{proof}

By contrast, ampleness of $D^{(e)}_X/\O_X$ is a less stringent condition: 

\begin{prop}
$D^{(e)}_X/\O_X$ is ample if and only if $(\B_X^e)^\vee$ is ample.
\end{prop}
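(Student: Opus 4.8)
The plan is to identify the quotient $D^{(e)}_X/\O_X$ explicitly as a Frobenius pullback of $(\B_X^e)^\vee$, after which the equivalence is immediate: ampleness is both preserved and detected by the finite surjective morphism $F^e$ (exactly as in the proof of \Cref{ample}, where ampleness of $\B_X$ was reduced to that of $F^*\B_X$). Concretely, I aim to prove a canonical isomorphism of $\O_X$-modules
$$
D^{(e)}_X/\O_X \cong F^{e*}\bigl((\B_X^e)^\vee\bigr);
$$
granting this, $D^{(e)}_X/\O_X$ is ample if and only if $F^{e*}((\B_X^e)^\vee)$ is ample if and only if $(\B_X^e)^\vee$ is ample (and the same for nefness).

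The first step is to pin down the global structure of $D^{(e)}_X$ itself. Working locally on $X=\Spec R$ and writing $S:=R^{p^e}$, recall from \Cref{Yek} that $D^{(e)}_X=\Hom_S(R,R)$, and that the $\O_X$-module structure relevant to positivity is the honest one, by left multiplication on the target: $(f\cdot\delta)(r)=f\,\delta(r)$. Since $R$ is locally free over $S$, the natural map $R\otimes_S\Hom_S(R,S)\to\Hom_S(R,R)$, $a\otimes\xi\mapsto(r\mapsto a\,\xi(r))$, is an isomorphism carrying the honest left-$R$-structure to base change along the $R$-factor. Identifying $\Spec S\cong X$ via the isomorphism $R\xrightarrow{\sim}S$, $a\mapsto a^{p^e}$, the map $\Spec R\to\Spec S$ becomes $F^e$ and $\Hom_S(R,S)$ becomes $(F^e_*\O_X)^\vee$, so that this globalizes to a canonical isomorphism $D^{(e)}_X\cong F^{e*}\bigl((F^e_*\O_X)^\vee\bigr)$. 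I expect this Frobenius twist to be the crux of the argument: one must carefully separate the honest $\O_X$-structure (for which $D^{(e)}_X$ has rank $p^{e\dim X}$) from the naive $\SHom$-sheaf structure, and verify that the former is exactly the pullback structure.

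Finally I would pass to the quotient. Dualizing the defining sequence $0\to\O_X\xrightarrow{u}F^e_*\O_X\to\B_X^e\to0$ yields
$$
0\to(\B_X^e)^\vee\to(F^e_*\O_X)^\vee\xrightarrow{u^\vee}\O_X\to0,
$$
and applying $F^{e*}$, which is exact because $F^e$ is flat on the smooth variety $X$, produces a short exact sequence $0\to F^{e*}(\B_X^e)^\vee\to D^{(e)}_X\xrightarrow{F^{e*}u^\vee}\O_X\to0$. Under the isomorphism of the previous paragraph one checks that $F^{e*}u^\vee(\delta)=\delta(1)$; that is, $F^{e*}u^\vee$ is precisely the evaluation-at-$1$ map, which is split by the inclusion $a\mapsto(r\mapsto ar)$ of the multiplication operators $D^{(0)}_X=\O_X$. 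Its kernel is therefore the complementary summand $D^{(e)}_X/\O_X$, giving $D^{(e)}_X/\O_X\cong F^{e*}((\B_X^e)^\vee)$ and, with the first paragraph, the proposition.
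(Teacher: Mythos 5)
Your proof is correct and takes essentially the same route as the paper: both identify $D^{(e)}_X/\O_X$ with $F^{e*}\bigl((\B_X^e)^\vee\bigr)$ and conclude because ampleness ascends and descends along the finite surjective Frobenius, the only difference being order of operations (you dualize the sequence $0\to\O_X\to F^e_*\O_X\to\B_X^e\to 0$ downstairs and then pull back, while the paper pulls back first, splits $F^{e*}F^e_*\O_X=\O_X\oplus F^{e*}\B_X^e$, and identifies $\SHom(F^{e*}F^e_*\O_X,\O_X)$ with $D^{(e)}_X$ by tensor--hom adjunction). Your appeal to Kunz flatness for exactness of $F^{e*}$ is harmless but unnecessary, since all terms of the sequence are locally free.
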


\begin{proof}
On the one hand
$F^* F_* \O_X = \O_X\oplus F^* \B_X$, so that
$$
\SHom(F^* F_* \O_X,\O_X) = \O_X\oplus F^* \B_X^\vee.
$$

On the other hand,
$$
\SHom(F^* F_* \O_X,\O_X) 
= \SHom_{\O_X}(\O_X \otimes_{\O_{X^{p^e}}} \O_X, \O_X)
= \SHom_{\O_{X^{p^e}}}(\O_X , \O_X) = D^{(e)}_X
$$
is nothing but the sheaf of level-$e$ differential operators, with the trivial summand $\O_X$ of $\SHom(F^* F_* \O_X,\O_X)$ mapping isomorphically to the trivial summand $\O_X$ of $D^{(e)}_X$. 

Thus, there is an induced isomorphism
$$
F^* \B_X^\vee \cong D^{(e)}_X/\O_X,
$$

 Since $\B_X^\vee$ is ample if and only if $F^* \B_X^\vee $ is ample, the claim follows.
\end{proof}

\begin{rem}
Thus, we have that $D^m_X/\O_X$ ample for any $m$ implies that $X=\P^n$ and thus that $D_X^{(e)}/\O_X$ is ample for all $e$. The converse fails any time $(B_X^e)^\vee$ is ample but $X\neq \P^n$; for example, for quadrics in dimension $\geq 3$ and characteristic $\neq 2$.
Thus, one can view this fact as an ``unexpected positivity'' of the sheaf of level-$e$ differential operators, when compared against the order-$m$ differential operators.
\end{rem}

\bibliographystyle{alpha}
\bibliography{link}

\end{document}